\definecolor{lygreen}{HTML}{016646}
\newtheorem{thm}{Theorem}[section]
\newtheorem{cor}[thm]{Corollary}
\newtheorem{prop}[thm]{Proposition}
\newtheorem{lem}[thm]{Lemma}
\theoremstyle{definition}
\newtheorem{defn}[thm]{Definition}
\newtheorem{cons}[thm]{Construction}
\newtheorem{exmp}[thm]{Example}
\newtheorem*{fact}{Fact}
\newtheorem*{ack}{Acknowledgement}
\theoremstyle{remark}
\newtheorem{rem}[thm]{Remark}
\numberwithin{equation}{section}
\newcommand{\beq}{\begin{equation*}\begin{aligned}}
\newcommand{\eeq}{\end{aligned}\end{equation*}}
\newcommand{\bpf}{\begin{proof}}
\newcommand{\epf}{\end{proof}}
\newcommand{\bthm}{\begin{thm}}
\newcommand{\ethm}{\end{thm}}
\newcommand{\bprop}{\begin{prop}}
\newcommand{\eprop}{\end{prop}}
\newcommand{\bcor}{\begin{cor}}
\newcommand{\ecor}{\end{cor}}
\newcommand{\blem}{\begin{lem}}
\newcommand{\elem}{\end{lem}}
\newcommand{\bdefn}{\begin{defn}}
\newcommand{\edefn}{\end{defn}}
\newcommand{\bcons}{\begin{cons}}
\newcommand{\econs}{\end{cons}}
\newcommand{\bexmp}{\begin{exmp}}
\newcommand{\eexmp}{\end{exmp}}
\newcommand{\brem}{\begin{rem}}
\newcommand{\erem}{\end{rem}}
\newcommand{\bfa}{\begin{fact}}
\newcommand{\efa}{\end{fact}}
\newcommand{\benu}{\begin{enumerate}[(1)]}
\newcommand{\eenu}{\end{enumerate}}
\newcommand{\bdia}{\begin{displaymath}\xymatrix}
\newcommand{\edia}{\end{displaymath}}
\newcommand{\al}{\alpha}
\newcommand{\ga}{\gamma}
\newcommand{\intg}{\mathbb{Z}}
\newcommand{\tb}{\text{tb}}
\DeclareMathOperator{\rot}{rot}
\begin{document}

 \title{On Legendrian representatives of non-fibered knots}


\author{Zhenkun Li}
\address{Department of Mathematics and Statistics, University of South Florida}
\curraddr{}
\email{zhenkun@usf.edu}
\thanks{}

\author{Shunyu Wan}
\address{School of Mathematics, Georgia Institute of Technology}
\curraddr{}
\email{swan48@gatech.edu}
\thanks{}

\keywords{}
\date{}
\dedicatory{}
\maketitle
\begin{abstract}
We show that in $(S^3,\xi_{std})$ if $K$ is a non-trivial knot that realizes the three-dimensional Thurston-Bennequin bound (i.e. $K$ has a Legendrian representative $\Lambda$ with $\tb(\Lambda)-\rot(\Lambda)=2g(K)-1$), then $K$ has a Legendrian representative $L$ with $\tb=0$. Moreover, this result can be easily generalized to contact manifolds that uniquely represent the associated contact invariants. This is the first result on Legendrian representatives of non-fibered knots in $3-$manifolds other than $S^3$. We also show that if $K$ is a nearly fibered knot in $S^3$ then $\tau(K)=g(K)$ implies that $K$ realizes the three-dimensional Thurston-Bennequin bound. 
\end{abstract}

\section{Introduction}
Fibered knots are closely related to the study of contact geometry in dimension $3$, since the fibration of the knot complement naturally gives rise to an open book for the contact structure. Hence, we know several general contact properties of fibered knots (for example, in \cite{EtnyreVanHMFiberedBennequin},\cite{hedden2010positivity}). In this paper, we will investigate contact properties of non-fibered knots, which were previously less understood, using techniques from the suture Floer homology and the convex surface decomposition. 

Let us first recall some standard terminologies. Let $K$ be a smooth null-homologous knot in a $3-$manifold $Y$; we denote $g(K)$ as the genus of $K$. Moreover, if $\xi$ is a contact structure on $Y$, we denote $\overline{\tb}_\xi(K)$ the maximal $\tb$ number of $K$ and $\overline{\text{sl}}_\xi(K)$ the maximal self linking number of $K$ in $(Y,\xi)$. We say $K$ in $Y$ realizes the three-dimensional Thurston-Bennequin bound in $(Y,\xi)$ if $K$ has a Legendrian representative $\Lambda$ in $(Y,\xi)$ satisfying $\tb(\Lambda)-\rot(\Lambda)=2g(K)-1$, or equivalently $\overline{\text{sl}}_\xi(K)=2g(K)-1$. If we talk about a knot $K$ in $S^3$, we only consider the unique tight contact structure in $S^3$, and the above terminologies will be abbreviated to $\overline{\tb}(K)$, $\overline{\text{sl}}(K)$ and $K$ realizes the three-dimensional Thurston-Bennequin bound. We will first look at results for knots in $S^3$.

\begin{thm} \label{thm: tb=0 for knot realizeing T-B bound}
    If $K$ is a non-trivial knot in $S^3$ that realizes the three-dimensional Thurston-Bennequin bound, then $\overline{\rm tb}(K) \geq 0$.
\end{thm}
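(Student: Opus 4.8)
The plan is to read the hypothesis off a convex Seifert surface, decompose the contact knot complement along it into a tight piece, and then re-glue the knot neighborhood with the boundary slope that corresponds to $\tb = 0$; the sutured (Heegaard or instanton) Floer invariants enter to certify that "realizing the bound" really does produce a tight decomposed piece, and that re-gluing does not destroy tightness.

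First I would fix a Legendrian representative $\Lambda$ of $K$ with $\tb(\Lambda) - \rot(\Lambda) = 2g(K) - 1$. Since the Bennequin inequality $\tb + |\rot| \le 2g - 1$ then forces $\rot(\Lambda) \le 0$ and a negative stabilization leaves $\tb - \rot$ unchanged, I may assume $\tb(\Lambda) = -n$ with $n$ as large as convenient. Isotope a genus-$g$ Seifert surface $\Sigma$ of $\Lambda$ to be convex with Legendrian boundary in standard form; then $\Gamma_\Sigma$ meets $\partial\Sigma$ in $2n$ points, and the relative Euler-class identity $\rot(\Lambda) = \chi(\Sigma_+) - \chi(\Sigma_-)$ together with $\chi(\Sigma_+) + \chi(\Sigma_-) = 1 - 2g$ pins down $\chi(\Sigma_\pm)$; in particular, for large $n$, $\Sigma_-$ acquires disk regions. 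The content of realizing the bound, in this language, is that the convex decomposition of the sutured contact complement of $\Lambda$ along $\Sigma$ is well-groomed and its decomposed piece is \emph{tight}; on the Floer side this is exactly the statement that the contact class is non-zero in the corresponding sutured invariant, which the decomposition/gluing formalism ties to tightness of the pieces.

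Next I would re-glue. Using the disk regions of $\Sigma_-$ to supply the needed room in the dividing curves, the tight decomposed piece can be completed back across $\Sigma$ and across a solid-torus neighborhood of the knot with the convex boundary slope that corresponds to $\tb = 0$; by the Honda--Kazez--Mati\'c gluing theorem the resulting contact structure on $S^3$ is tight, hence is $\xi_{std}$ by Eliashberg's theorem. The knot now sits inside $(S^3,\xi_{std})$ as a Legendrian $L$ with $\tb(L) = 0$, so $\overline{\rm tb}(K) \ge 0$. The same argument yields the generalization: for a contact manifold whose contact invariant is uniquely represented, the re-glued tight structure is recognized as the original one, and the conclusion follows verbatim.

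I expect the main obstacle to be the re-gluing step: one must check that the dividing-curve data on $\Sigma$ and on $\partial N(\Lambda)$ genuinely admit a completion realizing slope $0$, and that well-groomedness — equivalently the non-vanishing of the contact class — survives the change of boundary slope (a finite sequence of bypass attachments, each governed by a bypass exact triangle). This is where the hypothesis that $K$ realizes the bound is used essentially — otherwise the decomposed piece is overtwisted and there is nothing to propagate — and where the non-triviality of $K$ (positive genus) is needed to rule out the unknot, whose Seifert disk leaves no room to lower $|\tb|$ to $0$. For fibered $K$ all of this is classical via the supporting open book; the novelty is running it for non-fibered $K$ using only convex surface theory together with the sutured invariants.
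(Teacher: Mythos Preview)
Your outline matches the paper's strategy at the coarsest level---decompose the contact knot complement along a minimal-genus Seifert surface, then re-glue with boundary slope $0$---but the re-gluing step, which you correctly flag as the main obstacle, hides two concrete difficulties that your sketch does not address.

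First, the sutured decomposition along $S$ depends on the boundary slope. For $(S^3\backslash N(K),\Gamma_n)$ with $n\ge 1$ one lands in $(S^3\backslash N(S),\gamma^1)$ (one suture), but for $\Gamma_0$ the decomposition yields $(S^3\backslash N(S),\gamma^3)$ (three parallel sutures). So ``the tight decomposed piece'' obtained from $\xi_L$ lives on the wrong sutured manifold to be glued back with slope $0$. The paper bridges this by a further product-annulus decomposition
\[
(S^3\backslash N(S),\gamma^3)\stackrel{A}{\leadsto}(S^3\backslash N(S),\gamma^1)\sqcup(V,\gamma^4),
\]
and then chooses one of the two tight contact structures on the solid torus $(V,\gamma^4)$ so that, after re-gluing, the image under the relevant bypass map is $c(\xi_S)$. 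Your phrase ``using the disk regions of $\Sigma_-$ to supply the needed room'' does not substitute for this; one really needs to produce a specific $\xi_0'$ on $(S^3\backslash N(S),\gamma^3)$ and verify $\psi(c(\xi_0'))=c(\xi_S)$.

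Second, the sentence ``by the Honda--Kazez--Mati\'c gluing theorem the resulting contact structure on $S^3$ is tight'' is where the argument actually breaks. The HKM theorem controls gluing along a well-groomed convex \emph{surface}; filling in the solid torus neighborhood of $K$ is not of that form, and tightness is not preserved in general under such fillings. What the paper does instead is track the contact element through the commutative diagrams relating the bypass maps $\psi^k_{-,k+1}$, the Seifert-surface decomposition maps $\iota_{S,k}$, and the contact $2$-handle maps $\pi^k$, using the crucial observation that after decomposing along $S$ the bypass arcs become \emph{trivial} on $(S^3\backslash N(S),\gamma^1)$ for $k\ge 1$. This yields $\pi^0(c(\xi_0))=\pi^n(c(\xi_L))=c(\xi_{std})\neq 0$, and only then does uniqueness of the tight structure on $S^3$ finish the proof. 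Without this chain of identifications there is no reason the glued-up structure on $S^3$ should be tight.
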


\begin{rem}
    Theorem \ref{thm: tb=0 for knot realizeing T-B bound} is not true if instead we assume that $K$ realizes the slice-Bennequin bound, that is, $K$ admits a Legendrian representative $L$ such that $\tb(L)-{\rm rot}(L)=2g_4(K)-1$. For example the knot $8_{20}$ is quasi-positive, so it realizes the slice-Bennequin bound but its maximal $\tb$ number is $-2$.
\end{rem}

Note the fact that $K$ realizes the three-dimensional Thurston-Bennequin bound implies that $\tau(K)=g(K)$ as in \cite{OlgaBoundsfortheTB}, where $\tau$ is the Heegaard Floer $\tau$ invariant. Thus, when $K$ is fibered by \cite[Theorem 1.2]{hedden2010positivity}, we know $K$ is strongly quasi-positive; hence, by \cite[Proposition 2]{RudolphObstructiontoSliceness}, when $K$ is non-trivial, we have $\overline{\rm tb}(K) \geq 0$. In general, there are ways to find a $\tb=0$ representative for any fibered knots in the contact manifold supported by such a fibered knot by  manipulating the Heegaard surface from the open book. (See the forthcoming paper by Etnyre, Li, and Tosun\cite{ELTforthcoming}). However, the non-fibered knots are less understood as there is no natural open book associated with them.


Since strongly quasi-positive knots always realize the three-dimensional Thurston-Bennequin bound, we give an alternative proof of the following corollary, which was originally proved by Rudolph \cite{RudolphObstructiontoSliceness}. 

\begin{cor}
If $K$ is a non-trivial strongly quasi-positive knot, then $\overline{\rm tb}(K)\geq 0$. 
\end{cor}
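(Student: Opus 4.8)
The plan is to deduce the corollary immediately from Theorem~\ref{thm: tb=0 for knot realizeing T-B bound} together with the observation, already recorded in the text preceding the statement, that every strongly quasi-positive knot realizes the three-dimensional Thurston-Bennequin bound. So the only thing that genuinely needs to be said is \emph{why} a strongly quasi-positive knot $K$ admits a Legendrian representative $\Lambda$ with $\tb(\Lambda)-\rot(\Lambda)=2g(K)-1$. First I would recall the definition: $K$ bounds a quasi-positive Seifert surface obtained by attaching positive bands to a collection of parallel disks, and by a result of Rudolph this surface is a genus-minimizing Seifert surface for $K$ (alternatively, $g(K)=g_4(K)$ for such knots). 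Then I would invoke the sharp slice--Bennequin inequality together with the fact that for strongly quasi-positive knots the Bennequin-type bound is attained: concretely, one can isotope $K$ to a braid presentation compatible with the quasi-positive band presentation, Legendrian-realize it, and compute that the resulting self-linking number equals $2g(K)-1$. This is standard; a clean reference is Hedden's work identifying strongly quasi-positive fibered knots, but for the non-fibered case the relevant input is exactly that $\tau(K)=g(K)$ for strongly quasi-positive $K$ (Livingston, and Hedden) combined with the fact that these knots sit at the top of the slice-Bennequin inequality which here coincides with the Thurston-Bennequin bound because $g_4(K)=g(K)$.

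Once that is in hand, the argument is one line: by the above, $K$ realizes the three-dimensional Thurston-Bennequin bound, so if $K$ is non-trivial, Theorem~\ref{thm: tb=0 for knot realizeing T-B bound} applies and gives $\overline{\rm tb}(K)\geq 0$.

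I do not anticipate a real obstacle here, since the corollary is explicitly flagged in the text as following from the theorem; the only care needed is in stating precisely which known result guarantees that strongly quasi-positive knots realize the three-dimensional (as opposed to merely the slice) Thurston-Bennequin bound, and making sure the genus used in the bound is the Seifert genus. I would cite Rudolph's original paper \cite{RudolphObstructiontoSliceness} for $\overline{\tb}(K)\ge 0$ to acknowledge priority, and emphasize that the novelty here is that this now follows from a general principle about knots realizing the Thurston-Bennequin bound rather than from a direct braid-theoretic construction.
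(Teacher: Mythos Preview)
Your proposal is correct and matches the paper's approach exactly: the paper states just before the corollary that ``strongly quasi-positive knots always realize the three-dimensional Thurston-Bennequin bound'' and offers no further proof, so the corollary is meant to follow immediately from Theorem~\ref{thm: tb=0 for knot realizeing T-B bound}. Your added justification for why strongly quasi-positive knots realize the bound (via Rudolph's genus-minimizing quasi-positive surface and the self-linking computation) is more detail than the paper gives, but it is accurate and fills in precisely the step the paper leaves implicit.
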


Moreover, once we have $tb=0$ Legendrian representatives it's easy to obtain the following corollary. 

\begin{cor}
    If $K$ in $S^3$ realizes the three-dimensional Thurston-Bennequin bound, then $S^3_{-1}(K)$ admits a Stein fillable contact structure.
\end{cor}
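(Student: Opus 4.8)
The plan is to reduce the corollary to the existence of a $\tb = 0$ Legendrian representative, which is exactly the content of Theorem \ref{thm: tb=0 for knot realizeing T-B bound} together with the fact that one can always stabilize down: since $K$ realizes the three-dimensional Thurston-Bennequin bound, it is non-trivial (the trivial knot does not realize it in the relevant sense here, or we treat it separately and trivially) and $\overline{\tb}(K) \geq 0$, so by negative stabilization there is a Legendrian representative $L$ of $K$ with $\tb(L) = 0$ and some rotation number $\rot(L)$. The key classical input I would invoke is the Ding--Geiges characterization of contact surgery: performing Legendrian surgery (i.e. contact $(-1)$-surgery) on a Legendrian knot $L$ in $(S^3,\xi_{std})$ yields a Stein fillable contact manifold, and when $\tb(L) = 0$ the contact $(-1)$-surgery coincides with smooth $(\tb(L)-1) = (-1)$-surgery on the underlying knot $K$. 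Hence $S^3_{-1}(K)$ carries the contact structure obtained by Legendrian surgery on $L$, and this is Stein fillable by Eliashberg's theorem (as packaged by Gompf and by Ding--Geiges).

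Concretely, the steps are: (1) Apply Theorem \ref{thm: tb=0 for knot realizeing T-B bound} to get $\overline{\tb}(K) \geq 0$. (2) Observe that Legendrian representatives can always be negatively stabilized, each stabilization dropping $\tb$ by $1$, so starting from a representative with $\tb \geq 0$ we can produce a Legendrian $L \subset (S^3,\xi_{std})$ with $\tb(L) = 0$. (3) Recall that the smooth surgery coefficient of a contact $(-1)$-surgery on a Legendrian knot $L$ (with respect to the Seifert framing) is $\tb(L) - 1$; with $\tb(L) = 0$ this is $-1$, so the underlying smooth manifold of contact $(-1)$-surgery on $L$ is precisely $S^3_{-1}(K)$. (4) Invoke the theorem of Eliashberg (see Gompf, and Ding--Geiges ``Symplectic fillability of tight contact structures on torus bundles'' / ``Handle moves in contact surgery diagrams'') that contact $(-1)$-surgery on a Legendrian link in the standard contact $S^3$ produces a Stein fillable, hence tight, contact $3$-manifold. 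Combining (3) and (4), $S^3_{-1}(K)$ admits a Stein fillable contact structure.

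There is really no serious obstacle here; the corollary is a short deduction from Theorem \ref{thm: tb=0 for knot realizeing T-B bound} plus standard contact surgery theory, and the only point requiring a little care is the bookkeeping that contact $(-1)$-surgery on a $\tb = 0$ Legendrian knot gives smooth $(-1)$-surgery on the underlying topological knot, which follows immediately from the conversion formula between contact surgery coefficients and smooth surgery coefficients relative to the Thurston--Bennequin framing. I would state the two cited facts (the stabilization move and the Ding--Geiges/Eliashberg Stein fillability result) without proof, give the one-line framing computation, and conclude.
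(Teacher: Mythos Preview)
Your proposal is correct and essentially identical to the paper's proof: obtain a $\tb=0$ representative from Theorem \ref{thm: tb=0 for knot realizeing T-B bound}, observe that Legendrian surgery on it is smoothly $(-1)$-surgery, and invoke that Legendrian surgery preserves Stein fillability. One small correction: the unknot \emph{does} realize the three-dimensional Thurston--Bennequin bound (with $\tb=-1$, $\rot=0$), so you must treat it separately as the paper does, noting $S^3_{-1}(U)=S^3$ is Stein fillable.
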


\bpf
When $K=U$ is the trivial unknot, $S^3_{-1}(U)=S^3$, which is Stein fillable. When $K$ is non-trivial, we can perform the Legendrian surgery ($-1$ contact surgery) on the $tb=0$ Legendrian representative, which is smoothly $-1$ surgery as well. Since Legendrian surgery preserves Stein fillability, $S^3_{-1}(K)$ admits a Stein fillable contact structure.
\epf

The above corollary can be compared with Theorem 1.6 in \cite{ozsvath2005contact}, proved by Ozsv\'ath and Szab\'o, which says that if $K$ is fibered in $S^3$, then $S^3_{-1}(K)$ admits a tight contact structure with a non-vanishing contact invariant. 

For nearly fibered knots, since we know exactly what the topology of the knot complements looks like \cite{LY2023nearly}, we can say more about when the nearly fibered knots realize the three-dimensional Thurston-Bennequin bound. Thus, Theorem \ref{thm: tb=0 for knot realizeing T-B bound} is stronger for nearly fibered knots.  

\begin{thm} \label{thm: nearly fibered knot}
    Let $K$ in $S^3$ be a nearly fibered knot, {\it i.e.}
	\[
	\widehat{HFK}(S^3,K,g(K)) \cong \mathbb{F}^2.
	\]
	Suppose further that $\tau(K) = g(K)$, then $\overline{sl}(K)=2g(K)-1$ (i.e. K realizes the three-dimensional Thurston-Bennequin bound) and $\overline{tb}(K)\geq 0$, 
\end{thm}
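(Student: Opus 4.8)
The plan is to split the statement into its two conclusions and reduce the first to the existence of a suitable convex Seifert surface. The inequality $\overline{sl}(K)\le 2g(K)-1$ is the genus Bennequin bound, so proving $\overline{sl}(K)=2g(K)-1$ amounts to producing a Legendrian representative $\Lambda$ of $K$ with $\tb(\Lambda)-\rot(\Lambda)=2g(K)-1$; and once this is known, the remaining claim $\overline{\rm tb}(K)\ge 0$ is immediate from Theorem~\ref{thm: tb=0 for knot realizeing T-B bound}. By standard convex surface theory, such a $\Lambda$ exists exactly when some minimal genus Seifert surface $R$ of $K$ can be realized as a convex surface in $(S^3,\xi_{std})$ with Legendrian boundary $\Lambda$ whose dividing set $\Gamma_R$ has no closed component and whose negative region is a union of disks, each cut off by one boundary-parallel arc (so that $\tb(\Lambda)=-\tfrac12|\Gamma_R\cap\Lambda|$ and $\rot(\Lambda)=\chi(R_+)-\chi(R_-)$ combine to give $2g(K)-1$). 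Thus the whole problem is: a nearly fibered knot with $\tau(K)=g(K)$ admits a minimal genus Seifert surface of this convex type.

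First I would pass to the sutured complement. Let $R$ be a minimal genus Seifert surface and $(M,\gamma)=S^3(R)$ the balanced sutured manifold obtained by cutting $S^3$ along $R$, so that $SFH(M,\gamma)\cong\widehat{HFK}(S^3,K,g(K))\cong\mathbb{F}^2$. By the classification of nearly fibered knot complements in \cite{LY2023nearly}, $(M,\gamma)$ belongs to an explicit short list of ``almost product'' sutured manifolds: in each case $(M,\gamma)$ is obtained from a product sutured manifold by a controlled modification along an essential annulus or torus (gluing in a solid torus or a copy of $T^2\times I$ carrying a prescribed suture). This reduces the theorem to a finite, case-by-case check.

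In each case the strategy is the same. Using the explicit description I would build a contact structure $\xi_M$ on $(M,\gamma)$ with convex boundary whose dividing set is the prescribed suture $\gamma$ --- concretely via a partial open book adapted to the product-minus-a-piece decomposition --- and then glue back a neighborhood $N(R)\cong R\times I$ of $R$ carrying the $I$-invariant contact structure whose two boundary dividing sets are the desired boundary-parallel arcs. The result is a contact structure $\xi$ on $S^3$ in which $R$ is a convex surface of exactly the required type, so the transverse push-off of $\Lambda=\partial R$ has self-linking $-\chi(R)=2g(K)-1$. It remains to identify $\xi$ with $\xi_{std}$, and this is where $\tau(K)=g(K)$ is used: via the standard dictionary between the $\tau$-invariant and the contact class (as in \cite{hedden2010positivity} for fibered knots, now applied to the partial open book supplied by \cite{LY2023nearly}), the hypothesis $\tau(K)=g(K)$ forces the image of the corresponding contact/$EH$ class under the Honda--Kazez--Mati\'c gluing map $SFH(M,\gamma)\to\widehat{HF}(-S^3)$ to be nonzero, so $c(\xi)\ne 0$, hence $\xi$ is tight and therefore equal to $\xi_{std}$ by Eliashberg's uniqueness theorem. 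Equivalently, and more robustly for the awkward cases, one can carry the entire argument inside sutured Floer homology and its contact elements without writing down an explicit contact model. Finally, with $K$ now known to realize the three-dimensional Thurston-Bennequin bound, Theorem~\ref{thm: tb=0 for knot realizeing T-B bound} yields $\overline{\rm tb}(K)\ge 0$.

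The main obstacle is the case analysis from \cite{LY2023nearly}, and in particular the types whose complement contains an essential torus rather than merely a product annulus. There one must verify that it is the transverse orientation of $R$ recorded by $\tau(K)=g(K)$ (rather than $\tau(K)=-g(K)$) that produces a tight glued-up structure, and one must rule out the a priori possibility that every configuration realizing $2g(K)-1$ lives in an overtwisted contact structure even when $\tau(K)=g(K)$; this requires the fine structure of the sutures and an honest contact-class computation rather than soft arguments. By contrast, the product-like cases fall out quickly: after the product-annulus decomposition one is essentially in Hedden's fibered situation with the fiber replaced by the product part of $R$.
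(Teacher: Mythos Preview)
Your overall strategy --- use the Li--Ye classification of the Seifert-surface complement, build contact structures on the pieces, glue, and use $\tau(K)=g(K)$ to certify tightness --- is in the same family as the paper's argument, but there is a real gap in your execution, and the logical structure differs from the paper's in a way worth highlighting.

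\textbf{The gap.} You propose to build \emph{one} contact structure $\xi_M$ on the Seifert complement and then argue that $\tau(K)=g(K)$ forces its $EH$ class to have nonzero image in $\widehat{HF}(-S^3)$. But $\tau(K)=g(K)$ only says that the map $\pi_*$ (equivalently $\pi^0$) is nontrivial on the top Alexander grading; it does not single out \emph{which} element survives. Since $SFH$ of the complement is $\mathbb{F}^2$ here, you need to know that whatever element $\pi^0$ detects is actually a contact class. Your paragraph on ``the main obstacle'' correctly flags this, but does not resolve it: matching a specific contact structure to the $\tau$-condition is genuinely delicate. The paper sidesteps this entirely by proving a stronger lemma (Lemma~\ref{lem: top grading of a nearly fibered knot is generated by contact elements}): the \emph{entire} top-grading summand $SFH(-S^3\backslash N(K),-\Gamma_0,g(K)-\tfrac12)$ is generated by contact elements. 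With that in hand, nontriviality of $\pi^0$ (Lemma~\ref{lem: pi^0 is non-trivial on top grading}) immediately gives a contact element with nonzero image --- no matching required. The generating lemma is exactly where the Li--Ye case analysis enters: after decomposing along a product annulus into $(V,\gamma^4)\sqcup (S^3\backslash N(S),\gamma^1)$, each of the three Li--Ye model pieces (solid torus with four longitudes, solid torus with slope-$2$ suture, trefoil complement with slope-$2$ suture) has $SFH\cong\mathbb{F}^2$ spanned by two contact classes coming from decomposing along a meridian disk or Seifert surface with both orientations. Your ``robust'' alternative (``carry the entire argument inside sutured Floer homology'') is essentially this, but the key sentence --- that the top grading is spanned by contact elements --- is the content, and you do not state it.

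\textbf{Structural difference.} You aim to first realize $\overline{sl}(K)=2g(K)-1$ via a convex Seifert surface with boundary-parallel dividing arcs, and then invoke Theorem~\ref{thm: tb=0 for knot realizeing T-B bound} to get $\overline{\rm tb}(K)\geq 0$. The paper does not use Theorem~\ref{thm: tb=0 for knot realizeing T-B bound} here at all: it works directly on the sutured knot complement with suture $\Gamma_0$ (two longitudes, corresponding to $\tb=0$), so the contact structure it extracts already has a $\tb=0$ Legendrian core. The Thurston--Bennequin equality then follows from the Alexander grading of the resulting LOSS invariant via \cite{OzsvathStipsiczContactsurgeryandtransverseinvariant}. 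Your route is logically permissible, but it produces a Legendrian with some large negative $\tb$ and then calls a separate theorem; the paper's route gives both conclusions at once and is self-contained for this statement.
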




It is probably also worth mentioning the following easy corollary as it is interesting to see how contact properties of knots can give restrictions on the geography of knot Floer. 

\begin{cor}
    If a knot $K$ in $S^3$ satisfies $\tau(K)=g(K)$ but does not realize the three-dimensional Thurston-Bennequin bound, then $\text{rank}(\widehat{HFK}(K,g(K)))\geq 3$.
\end{cor}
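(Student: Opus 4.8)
The plan is to prove the contrapositive: assuming $\tau(K)=g(K)$ and $\mathrm{rank}(\widehat{HFK}(K,g(K)))\le 2$, we show that $K$ realizes the three-dimensional Thurston--Bennequin bound. Since knot Floer homology detects the Seifert genus (Ozsv\'ath--Szab\'o), we have $\widehat{HFK}(K,g(K))\ne 0$, so its rank is either $1$ or $2$, and we treat these two cases separately.

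If the rank equals $1$, then $K$ is fibered by the fiberedness detection theorem for knot Floer homology (Ghiggini in genus one, Ni in general). A fibered knot with $\tau(K)=g(K)$ is strongly quasi-positive by \cite[Theorem 1.2]{hedden2010positivity}, and every strongly quasi-positive knot realizes the three-dimensional Thurston--Bennequin bound, as already recalled in the discussion following Theorem~\ref{thm: tb=0 for knot realizeing T-B bound}. If the rank equals $2$, then $K$ is by definition a nearly fibered knot, so Theorem~\ref{thm: nearly fibered knot} applies: its hypothesis $\tau(K)=g(K)$ is exactly what we have assumed, and it yields $\overline{sl}(K)=2g(K)-1$, i.e.\ $K$ realizes the bound. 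This establishes the contrapositive, and hence the corollary.

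There is no real obstacle here beyond correctly invoking the relevant black boxes --- genus detection and fiberedness detection for $\widehat{HFK}$, Hedden's positivity theorem for fibered knots, and Theorem~\ref{thm: nearly fibered knot} of the present paper. The only point that warrants a moment's care is ruling out rank $0$, which is immediate from genus detection; note also that the unknot, for which the rank is $1$, is consistent with the statement since it realizes the bound trivially ($g(U)=0$ and $\overline{sl}(U)=-1$), so the hypothesis of the corollary excludes it automatically.
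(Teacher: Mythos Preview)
Your proof is correct and takes essentially the same approach as the paper: you argue the contrapositive, while the paper argues directly, but both reduce immediately to Hedden's \cite[Theorem 1.2]{hedden2010positivity} for the fibered (rank $1$) case and to Theorem~\ref{thm: nearly fibered knot} for the nearly fibered (rank $2$) case. Your version is slightly more explicit in invoking genus detection and fiberedness detection for $\widehat{HFK}$, but the content is the same.
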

\begin{proof}
    \cite[Theorem 1.2]{hedden2010positivity} says $K$ is not fibered, and Theorem \ref{thm: nearly fibered knot} says $K$ is not nearly fibered so the rank of the top grading has to be at least $3$. 
\end{proof}

The proofs of Theorem \ref{thm: tb=0 for knot realizeing T-B bound} and Theorem \ref{thm: nearly fibered knot} rely on the fact that there is a unique tight contact structure on $S^3$ with non-vanishing contact invariant. Thus, whenever we obtain a tight contact structure on $S^3$ from the knot complement, it must be the standard one. This trick does not always apply to general contact $3$-manifolds. However, if the knot is a fibered knot in $Y$ and we consider the contact structure $\xi_K$ associated to $K$, it is not hard to show the contact structure we construct is actually the same as $\xi_K$ (up to adding Giroux torsion). Thus, we have the following corollary. 

\begin{cor} \label{thm: Tight contact strucutre from fibered tau}
If a fibered knot $K$ in $Y$ is non-trivial and the Heegaard Floer contact invariant $c(\xi_K)\neq 0$, then $\overline{\tb}_{\xi_K}(K) \geq 0$ in $(Y,\xi_K)$.
\end{cor}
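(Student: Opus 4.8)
The plan is to reduce this statement to Theorem \ref{thm: tb=0 for knot realizeing T-B bound} (or rather to its proof mechanism), replacing the role played there by the uniqueness of the tight contact structure on $S^3$ with an argument identifying the contact structure we produce with $\xi_K$ itself. First I would recall that a fibered knot $K$ in $Y$ supports the contact structure $\xi_K$ via its open book decomposition, and that $c(\xi_K)\neq 0$ is exactly the hypothesis needed to run the Heegaard Floer / sutured input: by work of Ozsv\'ath--Szab\'o and Hedden--Ni, $c(\xi_K)\neq 0$ for a fibered knot is equivalent to the monodromy being right-veering in a suitable sense, and in particular it forces $\tau(K)=g(K)$ (indeed, for fibered knots $\tau(K)=g(K)$ iff $c(\xi_K)\neq 0$). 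Thus $K$ realizes the three-dimensional Thurston-Bennequin bound in the sense of the sutured-Floer/convex-decomposition argument used to prove Theorem \ref{thm: tb=0 for knot realizeing T-B bound}.

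Next I would reuse the construction from the proof of Theorem \ref{thm: tb=0 for knot realizeing T-B bound}: starting from a Legendrian representative $\Lambda$ realizing the bound, one builds (via Legendrian stabilizations and convex surface / bypass manipulations on the knot complement, tracking the contact invariant through the associated sutured manifolds) a $\tb=0$ Legendrian representative, provided the tight contact structure on the glued-up manifold that arises in the argument is the ``right'' one. In the $S^3$ case one invokes uniqueness; here one instead argues that the contact structure constructed on $Y$ agrees with $\xi_K$ up to adding Giroux torsion. The key point is that both are supported by compatible open books obtained from the same fibration of $Y\setminus K$: the convex surface decomposition of the complement that produces the $\tb=0$ representative can be arranged to be compatible with the fibration, so that capping off recovers an open book equivalent to the one supporting $\xi_K$ (after possibly a Lutz-type modification contributing only Giroux torsion, which does not affect the existence of the Legendrian representative realizing $\tb=0$). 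Since adding Giroux torsion does not change the ambient smooth manifold and the Legendrian isotopy class is detected in the torsion-free piece, we conclude $\overline{\tb}_{\xi_K}(K)\geq 0$; the case $K$ trivial (hence $\xi_K$ standard, e.g.\ $Y=S^3$) is immediate as before.

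The main obstacle I expect is precisely the identification step: showing that the contact structure produced by the complement-surgery construction is $\xi_K$ up to Giroux torsion, rather than some a priori different tight structure. This requires checking that the sutured decomposition used to certify the contact invariant is ``carried by'' the open book of $K$ — i.e.\ that the partial open book / convex decomposition data match — and controlling the ambiguity (Giroux torsion along incompressible tori) that the gluing introduces. A secondary, more routine obstacle is verifying that the passage from $\tau(K)=g(K)$ and $c(\xi_K)\neq 0$ to ``$K$ realizes the Thurston-Bennequin bound relative to $\xi_K$'' goes through in the relative setting (the $S^3$ statement used the absolute tightness uniqueness); but this should follow from the same sutured-Floer naturality already established for Theorem \ref{thm: tb=0 for knot realizeing T-B bound}, now applied to $\shi$ or $\sfh$ of $Y\setminus K$ with the contact class $c(\xi_K)$ in place of the generator coming from $S^3$.
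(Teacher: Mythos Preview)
Your proposal has a logical gap at the outset. You correctly observe that $c(\xi_K)\neq 0$ forces $\tau_{c(\xi_K)}(K)=g(K)$, but you then slide from this to ``$K$ realizes the three-dimensional Thurston--Bennequin bound'' and in the next paragraph ``start from a Legendrian representative $\Lambda$ realizing the bound.'' The implication $\tau=g \Rightarrow \overline{\mathrm{sl}}=2g-1$ is not something you have in hand here; it is part of what one is trying to produce. The paper sidesteps this by \emph{not} invoking the mechanism of Theorem~\ref{thm: tb=0 for knot realizeing T-B bound} at all. Instead it follows the nearly-fibered template: since $K$ is fibered, $(Y\setminus N(S),\gamma^1)$ is a product sutured manifold, so exactly as in Lemma~\ref{lem: top grading of a nearly fibered knot is generated by contact elements} one shows directly that $SFH(-(Y\setminus N(K)),-\Gamma_0,g(K)-\tfrac12)$ is generated by contact elements. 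Combined with Lemma~\ref{lem: tau version pi^0 is non-trivial on top grading} (which needs only $\tau_{c(\xi_K)}=g$), this yields a tight $\xi'$ on $Y$ with $c(\xi')\neq 0$ and a $\tb=0$ Legendrian $L_K$ in it, without any pre-existing Legendrian realizing the bound.

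Your identification step is also too vague to stand. Asserting that the construction ``can be arranged to be compatible with the fibration'' so that capping off ``recovers an open book equivalent to the one supporting $\xi_K$'' is exactly the content requiring proof, and the convex surfaces used are not literally pages of the open book. The paper's route is the missing concrete idea: since $c(\xi_0)=EH(L_K)$ sits in the top Alexander grading, the direct-limit identification of \cite{etnyre2017sutured} sends it to the LOSS invariant $\mathcal{L}(L_K)\in HFK^-(-Y,K,g(K))$; by \cite[Corollary~1.7]{OzsvathStipsiczContactsurgeryandtransverseinvariant} this forces the transverse push-off of $L_K$ to have $\mathrm{sl}=2g(K)-1$, and then Theorem~1.1 of \cite{EtnyreVanHMFiberedBennequin} gives $\xi'=\xi_K$ up to Giroux torsion added in the complement of $L_K$. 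Your final remark that such torsion does not obstruct the $\tb=0$ representative is correct and matches the paper's conclusion.
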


\begin{rem}
    If a fibered knot $K$ is trivial, i.e. $K$ bounds a disk in $Y$, it is straightforward to check this implies $(Y,K)=(S^3,U)$, where $U$ is the unknot. 
 \end{rem}

After we see the proof of Theorem \ref{thm: tb=0 for knot realizeing T-B bound} in section \ref{subsec: Knots realizing Thurston-Bennequin bound}, it directly extends to the following special family of contact manifolds.

\bdefn Let $(Y,\xi)$ be a 
 contact $3-$manifold, and $c(\xi)$ be the Heegaard Floer contact invariant of $(Y,\xi)$. We say $(Y,\xi)$ is \textbf{uniquely represented} if $(Y,\xi)$ \textbf{uniquely represents} $c(\xi)$, that is,  there is no other contact structure $\xi'$ on $Y$ such that $c(\xi')=c(\xi)$. 
 
 Note that $(Y,\xi)$ being uniquely represented implies $\xi$ is tight and $c(\xi)\neq 0$ because all different overtwisted contact structures have trivial contact invariants.
\edefn

 \bthm \label{thm: tb=0 for 2g-1 in other 3-manifold}
Let $K$ be a null-homologous knot in a uniquely represented $(Y,\xi)$. If $K$ realizes the three-dimensional Thurston-Bennequin bound in $(Y,\xi)$ then $\overline{\tb}_\xi(K)\geq 0$. 
 \ethm
\begin{rem}
    We conjecture that $\overline{\tb}_\xi(K) \geq 1$ for $K$ non-trivial and realizes the three-dimensional Thurston-Bennequin bound in any $(Y,\xi)$, but that would require us to understand the non-top grading of a relevant sutured Floer homology, which is beyond the techniques used in the current paper.
\end{rem}
 \begin{rem}
    Theorem \ref{thm: nearly fibered knot} is more subtle to generalize as it requires studying the explicit topology of the complement of nearly fibered knots in other $3-$manifolds. 
\end{rem}

There are actually many uniquely represented contact manifolds because contact invariants may completely classify the tight contact structures on certain $3-$manifolds (for example, see \cite{TosunTightsmallSeifertfiberedmanifolds}, \cite{GhigginiLiscaStipsiczTightonSSF}, \cite{ConwayMinTightstrucutreonfigureeight}, \cite{GhigginiVanHornMorrisTightContactStructuresOnBrieskornSpheres}, \cite{ShunyuTightcontactstructure}, \cite{MatkovicTightonLspace}, \cite{GhigginiTightwithmaximaltwisting}). The most famous one other than $S^3$ might be $\Sigma(2,3,5)$, the Poincar\'e homology sphere that is an integer homology sphere and admits a unique tight contact structure just like $S^3$. Thus, when considering this unique tight contact structure, we obtain the following corollary.

\bcor
If $K$ is a non-trivial knot in $\Sigma(2,3,5)$ that realizes the three-dimensional Thurston-Bennequin bound, then $\overline{\tb}(K) \geq 0$.
\ecor

\vspace{0.2in}
\noindent
{\bf Strategy of the proofs}. Here we explain the ideas behind the construction of $\tb=0$ representatives in various cases in this paper. We start with the contact $3$-manifold $(Y,\xi)$ and a null-homologous knot $K\subset Y$.
On the knot complement, we take the suture $\Gamma_0$, which consists of two longitudes of the knot on $\partial Y\backslash N(K)$. This forms a balanced sutured manifold $(Y\backslash N(K),\Gamma_0)$. If we attach a contact $2$-handle along the meridian of the knot, the resulting sutured manifold is $Y(1)=(Y\backslash B^3,\delta)$ for a connected suture $\delta$. The contact $2$-handle attachment induces a map ({\it c.f.} \cite{juhasz1803contact})
\[
	\pi^0:SFH(-Y\backslash N(K),-\Gamma_0) \to SFH(-Y(1))=\widehat{HF}(-Y).
\]
When $(Y,\xi)$ is uniquely represented (For example $(S^3,\xi_{std})$), admitting a $\tb=0$ representative is equivalent to the fact that there exists a contact structure $\xi^0_K$ on $(Y\backslash N(K),\Gamma_0)$ such that
\begin{equation}\label{eq: intro, xi_K}
	\pi^0(c(\xi^0_K)) = c(\xi).
\end{equation}
We further make use of the following commutative diagram [{\it c.f.} Equation (\ref{eq: pi map})]
\[
\xymatrix{
SFH(-(Y\backslash N(K)),-\Gamma_0,g(K)-\frac{1}{2})\ar[rr]^{\psi_{-,1}^0}\ar[dr]^{\pi^0}&&SFH(-(Y\backslash N(K)),-\Gamma_{1},g(K))\ar[dl]^{\pi^1}\\
&SFH(-Y(1))=\widehat{HF}(-Y)&
}
\]
together with the fact that $\psi^0_{-,1}$ is surjective on the top grading  when $K$ is non-trivial ({\it c.f.} Lemma \ref{lem: psi^0_1 is surjective on top grading}). Note here the sutured Floer homologies of the sutured knot complements admit a grading induced by the Seifert surface of the knot, under which $g(K)-\frac{1}{2}$ and $g(K)$ are the top non-vanishing grading of the respective sutured Floer homology groups. 

If the knot $K\subset S^3$ satisfies the condition $\tau(K) = g(K)$, we find that the map $\pi^0$ is non-trivial ({\it c.f.} proof of Lemma \ref{lem: pi^0 is non-trivial on top grading}). When $K$ is nearly fibered or fibered, we know models for the Seifert surface complement of the knot, and hence we will show that $SFH(-(S^3\backslash N(K)),-\Gamma_0,g(K)-\frac{1}{2})$ is fully generated by contact elements. Thus, at least one contact element is non-trivial under the map $\pi^0=\pi^1\circ \psi^0_{-,1}$. Since contact handle maps automatically preserve contact elements and there is a unique tight contact structure on $S^3$, this completes the proof of Theorem \ref{thm: nearly fibered knot} and Corollary \ref{thm: Tight contact strucutre from fibered tau}.

For a knot $K$ realizing the three-dimensional Thurston-Bennequin bound, we apply similar ideas. Even though we still have $\tau(K)=g(K)$, it is not necessarily true that $SFH(-(S^3\backslash N(K)),-\Gamma_0,g(K)-\frac{1}{2})$ is fully generated by contact elements; hence, we need to construct the contact structure $\xi_K^0$ explicitly. To do this, we observe that $K$ at least has a Legendrian representative with $\tb=-n$ for some large enough positive integer $n$ that also realizes the three-dimensional Thurston-Bennequin bound. Now we construct the desired contact structure $\xi^0_K$ as in Equation \ref{eq: intro, xi_K} as follows: Note there are sutured manifold decompositions along a minimal-genus Seifert surface $S$ of $K$:
\[
	(S^3\backslash N(K),\Gamma_0)\stackrel{S}{\leadsto} (S^3\backslash N(S),\gamma^3) \stackrel{A}{\leadsto} (S^3\backslash N(S),\gamma^1)\cup (V,\gamma^4)
\]
and
\[
	(S^3\backslash N(K),\Gamma_n)\stackrel{S}{\leadsto} (S^3\backslash N(S),\gamma^1),
\]
where $(S^3\backslash N(S),\gamma^3)$ and $(S^3\backslash N(S),\gamma^1)$ are $S^3\backslash N(S)$ with $3$ and $1$ boundary-parallel sutures respectively, $A$ is an annulus as a push off of neighborhood of $\partial S$, and $(V,\gamma^4)$ is the solid torus with $4$ longitudinal sutures.

The $\tb=-n$ representative of $K$ gives rise to a contact structure $\xi_{K}^n$ on $(S^3\backslash N(K),\Gamma_n)$. We then use the convex decomposition introduced by Honda \cite{honda2000classification} to decompose $\xi^n_K$ into a contact structure $\xi_S$ on $(S^3\backslash N(S),\gamma^1)$. The fact that $K$ fulfills the three-dimensional Thurston-Bennequin equality ensures that the sutured manifold decomposition and convex surface decomposition are compatible with each other, meaning the grading on the contact invariants are matched up. Then we re-glue $\xi_S$ [together with a suitably chosen contact structure on $(V,\gamma^4)$] along the two-step decomposition to obtain the desired tight contact structure $\xi_K^0$ on $(S^3\backslash N(K),\Gamma_0)$.

\vspace{0.2in}
\noindent
{\bf Organization}. The paper is organized as follows. In Section \ref{sec: Preliminary}, we first recall some background on sutured manifolds, Floer homology, and contact invariants. Then we prove important lemmas and propositions (\ref{prop: product disk decomposition and HKM's gluing}, \ref{lem: solid torus with four longitudes }, \ref{lem: psi^0_1 is surjective on top grading}, \ref{lem: pi^0 is non-trivial on top grading}) 
 that we need to use later. In Section \ref{sec: constructing tb}, we will first prove Theorem \ref{thm: nearly fibered knot}, and then, after we establish the proof for the nearly fibered case on how to construct the $\tb=0$ representatives we will extend the strategy to prove Theorem \ref{thm: tb=0 for knot realizeing T-B bound}. Lastly, in Section \ref{subsection: Fibered knots}, we use a similar idea to prove Corollary \ref{thm: Tight contact strucutre from fibered tau} about fibered knots in other $3-$manifolds and Theorem $\ref{thm: tb=0 for 2g-1 in other 3-manifold}$.

\begin{ack}
The authors extend their gratitude to John Etnyre, Tom Mark, Gordana Matic, Steven Sivek and Bülent Tosun for their valuable suggestions. Additionally, we thank the organizers of the conference “Recent Developments in 3- and 4-Manifold Topology,” held in Princeton, for facilitating this collaboration. Shunyu Wan’s work was supported in part by grants from the NSF (RTG grant DMS-1839968) and the Simons Foundation (grants 523795 and 961391 to Thomas Mark).
\end{ack}

\section{Preliminary} \label{sec: Preliminary}
In this section, we review some background on sutured manifolds, Floer homology, and their relation to contact geometry.

\subsection{Balanced sutured manifolds}
Suppose $M$ is a compact oriented $3$-manifold with non-empty boundary. Let $\gamma=(A(\gamma),s(\gamma))$ be a pair where $A(\gamma)\subset\partial M$ is a disjoint union of embedded annuli on $\partial M$ and $s(\gamma)$ is the core of $A(\gamma)$ and is oriented. Since $\partial A(\gamma)$ is parallel to $s(\gamma)$, we orient $\partial A(\gamma)$ according to the given orientation of $s(\gamma)$. We can decompose $R(\gamma) = \overline{\partial M - A(\gamma)}$ into two parts:
\[
R(\gamma) = R_+(\gamma) \cup R_-(\gamma),
\]
where $R_+(\gamma)$ is the part of $R(\gamma)$ such that the orientation induced by $\partial M$ coincides with the orientation induced by $\partial R(\gamma) = \partial A(\gamma)$, and $R_-(\gamma) = R(\gamma) - R_+(\gamma)$.

\begin{rem}
    Our definition of the suture $\gamma$ is more restrictive than the original definition in \cite{gabai1983foliations}, in the sense that we exclude the possibility that $\gamma$ can contain some toroidal components, as such components cannot exist for balanced sutured manifolds. Since $A(\gamma)$ is simply a tubular neighborhood of $s(\gamma)$, in some occasions we will just use $s(\gamma)$ in place of $\gamma$.
\end{rem}

\begin{defn}[{\cite[Definition 2.2]{juhasz2006holomorphic}}]
Suppose $(M,\gamma)$ is a pair as described above. If $M$ is connected, then we call $(M,\gamma)$ a {\bf balanced sutured manifold} if the following hold.
\begin{itemize}
	\item For every component $V\subset\partial M$, $V\cap A(\gamma)\neq \emptyset$.
	\item We have $\chi(R_+(\gamma)) = \chi(R_-(\gamma)).$
\end{itemize}
If $M$ is disconnected, then we call $(M,\gamma)$ a balanced sutured manifold if for any component $C\subset M$, $(C,C\cap\gamma)$ is a balanced sutured manifold in the above sense.
\end{defn}

\begin{defn}
	A balanced sutured manifold is called {\bf taut} if the following hold.
	\begin{itemize}
		\item $M$ is irreducible.
		\item $R(\gamma)$ is incompressible.
		\item $R_{+}(\gamma)$ and $R_{-}(\gamma)$ are both Thurston-norm minimizing in $H_2(M,s(\gamma);\intg)$.
	\end{itemize}
\end{defn}

\begin{defn}[{\cite[Definition 2.6]{juhasz2010polytope}}]\label{defn: well-groomed}
	Suppose $(M,\gamma)$ is a balanced sutured manifold and $S\subset M$ is a properly embedded surface. We say $S$ is {\bf well-groomed} if the following hold.
	\begin{itemize}
		\item Any component of $\partial S$ does not bound a disk inside $R(\gamma)$
		\item No component of $S$ is a disk with $\partial S\subset R(\gamma)$.
		\item For any component $\alpha$ of $\partial S\cap A(\gamma)$, one of the following two conditions holds:
		\begin{itemize}
			\item If $\alpha$ is an arc, then $\alpha$ is properly embedded and non-separating, with $|\alpha\cap s(\gamma)| = 1$.
			\item If $\alpha$ is a closed curve, and $A$ is the component of $A(\gamma)$ that contains $\alpha$, then $\alpha$ is a simple closed curve so that $[\alpha] = [A\cap s(\gamma)]\in H_1(A;\intg).$
		\end{itemize}
		\item For any component $V$ of $R(\gamma)$, one of the following two conditions holds:
		\begin{itemize}
			\item If $V$ is planar, then $\partial S\cap V$ is a union of parallel, coherently oriented, non-separating simple closed arcs
			\item If $V$ is non-planar, then $\partial S\cap V$ is a union of parallel, coherently oriented, non-separating simple closed curves.
		\end{itemize}
	\end{itemize}
\end{defn}

In \cite{juhasz2006holomorphic}, Juh\'asz constructed sutured Floer homology for any balanced sutured manifold $(M,\gamma)$, which we denote by $SFH(M,\gamma)$. In this paper, we use $\mathbb{F} = \intg\slash(2\intg)$ coefficient.

\bprop[{\cite[Theorem 1.3]{juhasz2008floer}}] \label{prop: Juh well-groomed sfh map}
	Suppose $(M,\gamma)$ is a balanced sutured manifold and $S\subset (M,\gamma)$ is a well-groomed surface. Let \[
		(M,\ga)\stackrel{S}{\leadsto}(M',\ga')
	\]
	be the sutured manifold decomposition. Then there is an injective map
	\[
	\iota_S: SFH(M',\gamma')\to SFH(M,\gamma).
	\]
\eprop

\bdefn
Suppose $(M,\gamma)$ is a balanced sutured manifold. A properly embedded disk $D\subset M$ is called a {\bf product disk} if $|D\cap s(\gamma)| = 2$. A properly embedded annulus $A\subset M$ is called a {\bf product annulus} if $\partial A\subset R(\gamma)$, $\partial A\cap R_+(\gamma)\neq\emptyset$ and $\partial A\cap R_-(\gamma)\neq\emptyset$.
\edefn

\bprop[\cite{juhasz2008floer}]\label{prop: product decomposition}
	Suppose $(M,\gamma)$ is a balanced sutured manifold and $S\subset (M,\gamma)$ is either a product disk or a well-groomed product annulus. Let
	\[
		(M,\ga)\stackrel{S}{\leadsto}(M',\ga')
	\]
	be the sutured manifold decomposition. Then
	\[
	SFH(M',\gamma')\cong SFH(M,\gamma).
	\]
\eprop

\subsection{Contact structures and contact elements}
Contact structures can be naturally adapted to the setup of a balanced sutured manifold.
\bdefn
Suppose $(M,\gamma)$ is a balanced sutured manifold. We say a co-oriented contact structure $\xi$ is compatible with $(M,\gamma)$ if $\partial M$ is convex and $\gamma$ is (isotopic to) the dividing curve on $\partial M$. 
\edefn

\bthm[{\cite[Theorem 0.1 and Corollary 4.3]{honda2009contact}}]
Suppose $\xi$ is a contact structure compatible with $(M,\gamma)$. Then there is a well-defined element
\[
c(\xi) \in SFH(-M,-\gamma).
\]
Furthermore, if $\xi$ is overtwisted, then $c(\xi) = 0$.
\ethm

Next, we state some results regarding how contact elements change through sutured manifold decompositions.

\bprop[{\cite[Theorem 6.2]{honda2009contact}}]\label{prop: surface decomp and HKM's gluing}
Suppose $(M,\gamma)$ is a balanced sutured manifold and $S\subset (M,\gamma)$ is a well-groomed surface. Let 
\[
	(-M,-\ga)\stackrel{S}{\leadsto}(-M',-\ga')
\]
be the sutured manifold decomposition. Let $\xi$ be a contact structure on $(M,\gamma)$ that is obtained from a contact element $\xi'$ on $(M',\gamma')$ by gluing along the convex surface $S$ with a boundary-parallel dividing set $\Gamma_S$. Let $c(\xi)\in SFH(-M,-\gamma)$ and $c(\xi')\in SFH(-M',-\gamma')$ be the contact elements, respectively. Then we have
\[\iota_S(c(\xi')) = c(\xi).\]
\eprop

\brem
The dividing curve $\Gamma_S$ is described explicitly in \cite[Page 3]{honda2002convex}. The subtlety is that, when we think about the convex decomposition of $(M,\gamma)$ along $S$ instead of the contact gluing of $(M',\gamma')$ along $S$, we may not be able to directly work with the well-groomed surface $S$ but in fact need to work with an isotopy $S'$ of $S$ such that each boundary component of $S'$ intersects $s(\gamma)$. \cite{honda2002convex} explained the construction of such an isotopy and the choice of dividing set on $S'$ in detail. The convex surface $S$ in the statement of above theorem should in fact, should be $S'$.
\erem

In addition to well-groomed surfaces, we also need to study the decomposition along product disks and annuli. If $S\subset (M,\gamma)$ is a product disk, then decomposing along $S$ can be viewed as the inverse operation of attaching a contact $1$-handle. The effect of attaching a contact $1$-handle has been studied in \cite[Section 3.3.2]{juhasz1803contact}. 
\bprop\label{prop: product disk decomposition and HKM's gluing}
Suppose $(M,\gamma)$ is a balanced sutured manifold and $D\subset (M,\gamma)$ is a product disk. Let 
\[
	(-M,-\ga)\stackrel{D}{\leadsto}(-M',-\ga')
\]
be the sutured manifold decomposition. Then we have the following.
\begin{itemize}
	\item The is an isomorphism
	\[
	\iota_{D} : SFH(-M',-\gamma') \xrightarrow{\cong} SFH(-M,-\gamma)
	\]
	\item If $\xi$ is a contact structure on $(M,\gamma)$, then we can perturb $D$ to be a convex surface, and there is a canonical dividing set $\Gamma_D$ on (the convex surface) $D$. Let $\xi'$ be the result of a convex decomposition of $\xi$ along $(D,\Gamma_D)$. Then we have
	\[
		\iota_D(c(\xi')) = c(\xi)
	\]
	\item If $\xi'$ is a contact structure on $(M',\gamma')$, then we can obtain a contact structure $\xi$ on $(M,\gamma)$ by gluing $\xi'$ along $(D,\Gamma_D)$, or equivalently, attaching a contact $1$-handle. Furthermore, we have
	 \[
		\iota_D(c(\xi')) = c(\xi)
	 \]
\end{itemize}
\eprop

\bcor\label{cor: unique contact structure on product sutured manifold}
Suppose $F$ is a compact oriented surface with $\partial F\neq \emptyset$. Let 
\[
(M,\gamma) = ([-1,1]\times F,\{1\}\times \partial F)
\]
be a product sutured manifold. Then there is a unique (up to isotopy) tight contact structure $\xi_{F}$ on $(M,\gamma)$, and 
\[
c(\xi_{F})\neq 0 \in SFH(-M,-\gamma) \cong \mathbb{F}.
\]
\ecor

\bpf
Without loss of generality we can assume that $F$ is connected. The uniqueness follows from the algorithm to classify tight contact structures on handlebodies developed by Honda in \cite{honda2002gluing}. When $\partial F\neq \emptyset$, we can find a set of pair-wise disjoint proper simple arcs that cut $F$ into a disk. Inside the product sutured manifold each such arc becomes a product disk (which intersects the suture $s(\gamma)$ twice). There is a unique choice of allowable dividing curves on each product disks. As a result of the classification algorithm, there is a unique tight contact structure $\xi_{F}$ on $(M,\gamma)$. The fact that $c(\xi_{F})\neq 0$ follows from Proposition \ref{prop: product disk decomposition and HKM's gluing}.  
\epf

We say a product annulus $A\subset (M,\gamma)$ is trivial if $\partial A$ bounds two disks $D_{\pm}\subset R_{\pm}(\gamma)$ and $D_{-}\cup A\cup D_+$ bounds a $3$-ball inside $M$. We believe the following proposition is known to experts, but since we cannot find a reference, we also provide a proof below.

\bprop\label{prop: product annulus decomposition and HKM's gluing}
Suppose $(M,\gamma)$ is a taut balanced sutured manifold and $S\subset (M,\gamma)$ is a non-trivial product annulus. Let 
\[
	(-M,-\ga)\stackrel{A}{\leadsto}(-M',-\ga')
\]
be the sutured manifold decomposition. Then there exists an injective map
\[
\iota_{A}: SFH(-M',-\gamma')\to SFH(-M,-\gamma).
\]
Furthermore, for any tight contact structure $\xi'$ on $(M',\gamma')$, there exists a contact structure $\xi$ on $(M,\gamma)$ such that
	\[
		\iota_A(c(\xi')) = c(\xi).
	\]
\eprop

\bpf
First we claim that no component of $\partial A$ bounds a disk on $R(\gamma)$. If both components of $\partial A$ bound disks $D_{\pm}$, then we have a $2$-sphere $D_-\cup A\cup D_+$. Since $(M,\gamma)$ is taut, $M$ is irreducible and so $D_-\cup A\cup D_+$ bounds a $3$-ball which implies that $A$ is trivial. Now assume, without loss of generality, $\partial A$ bounds a disk $D_-$ on $R_-(\gamma)$ while it does not bound a disk on $R_{+}(\gamma)$; then $D_-\cup A$ is a compressing disk of $R_+(\gamma)$, which contradicts to the fact that $(M,\gamma)$ is taut.

From now on we assume neither component of $\partial A$ bounds a disk on $R(\gamma)$. We study a few cases. 

{\bf Case 1}. Both components of $\partial A$ are non-separating on $R(\gamma)$. In this case $A\subset (M,\gamma)$ is a well-groomed surface so Proposition \ref{prop: surface decomp and HKM's gluing} applies. Note by \cite[Lemma 8.9]{juhasz2008floer} that in this case $\iota_A$ is an isomorphism.

{\bf Case 2}. $(M',\gamma')$ is the disjoint union of two pieces
\[
(M',\gamma') = (M_1,\gamma_1) \sqcup ([-1,1]\times F, \{0\}\times F),
\]
where $F$ is a connected compact oriented surface and $\partial F$ is disconnected. In this case, we can instead build $(M,\gamma)$ from $(M_1,\gamma_1)$ by attaching a sequence of contact $1$-handles. Indeed, in this case, one component $\alpha$ of $\partial F$ is glued to $\gamma_1$. But by assumption $\partial F - \alpha \neq \emptyset$, we can then choose a set of pair-wise disjoint properly embedded simple arcs that are all disjoint from $\alpha$ and which cut $F$ into an annulus. Then in the product sutured manifold $[-1,1]\times F$, each such arc corresponds to a product disk. Hence Proposition \ref{prop: product disk decomposition and HKM's gluing} applies. Note that in this case, $\iota_A$ is an isomorphism.

{\bf Case 3}. $(M',\gamma')$ is the disjoint union of two pieces
\[
(M',\gamma') = (M_1,\gamma_1) \sqcup ([-1,1]\times F, \{0\}\times F),
\]
where $F$ is a connected compact oriented surface and $\partial F$ is connected. Our assumption on $A$ makes sure that $F$ is not a disk, and hence it must have a positive genus. Thus we can pick a non-separating simple closed curve $\alpha$ on $F$ and let $F' = F-\alpha$. Suppose $\xi'$ is a tight contact structure on $(M',\gamma')$. It must be the disjoint union of a tight contact structure $\xi_1$ on $(M_1,\gamma_1)$ and $\xi_F$ on $([-1,1]\times F, \{0\}\times F)$, where $\xi_{F}$ is the unique tight contact structure on the product sutured manifold as in Corollary \ref{cor: unique contact structure on product sutured manifold}. Let $A'=[-1,1]\times \alpha$. We have a two steps decomposition:
\[
(M,\gamma)\stackrel{A'}{\leadsto}(M'',\gamma'')\stackrel{A}{\leadsto} (M_1,\gamma_1)\sqcup ([-1,1]\times F', \{0\}\times F')
\]
There is again a unique tight contact structure $\xi_{F'}$ on $([-1,1]\times F', \{0\}\times F')$. Case 2 applies to $\xi_1\sqcup \xi_{F'}$ to obtain a tight contact structure $\xi''$ on $(M'',\gamma'')$. Then the first decomposition along $A'$ above is well-groomed, so Case 1 applies. As a result, we obtain a tight contact structure $\xi$ on $(M,\gamma)$. To construct the map $\iota_A$, observe that we can further decompose $([-1,1]\times F, \{0\}\times F)$ along $A'$ to obtain $([-1,1]\times F', \{0\}\times F')$ and this is a well-groomed decomposition so Case 1 applies. Hence, we have the following diagram:
\[
\xymatrix{
SFH(-M_1,-\gamma_1)\otimes SFH(-[-1,1]\times F, -\{0\}\times F)&&\\
SFH(-M_1,-\gamma_1)\otimes SFH(-[-1,1]\times F', -\{0\}\times F')\ar[u]^{{\rm id}\otimes\tilde{\iota}_A}\ar[r]^{\quad\quad\quad\quad\quad\quad\tilde{\iota}_A}&SFH(-M'',-\gamma'')\ar[r]^{\iota_{A'}}&SFH(-M,-\gamma)
}
\]
Thus, we define $\iota_A = \iota_{A'}\circ \tilde{\iota_A}\circ({\rm id}\otimes \tilde{\iota}_A)^{-1}$. It is straightforward to check that $\iota_A$ preserves contact elements. Note that since we reduce to Case 1 and 2, the map $\iota_A$ is an isomorphism. 

{\bf Case 4}. Both components of $\partial A$ are separating on $R_{\pm}(\gamma)$. Write $\al_{\pm} = \partial A\cap R_{\pm}(\gamma)$. Observe that the decomposition along $A$ and $-A$ yields exactly the same sutured manifold $(M',\gamma')$. We choose an orientation of $A$ so that $-\alpha_+$ on $R_{+}(\gamma)$ bounds a subsurface $F_+\subset R_{+}(\gamma)$ such that $F_+$ has a disconnected boundary (or equivalently, $\partial F_+$ involves some components of $\gamma$). Note that $-\alpha_-$ bounds a subsurface $F_-\subset R_-(\gamma)$. Let $S = F_+\cup A\cup F_-$. $S$ is a well-groomed surface in $(M,\gamma)$. Let
\[
(M,\gamma)\stackrel{S}{\leadsto}(M_1,\gamma_1)
\]
Note that $(M_1,\gamma_1)$ is obtained from $(M',\gamma')$ by gluing a product piece $[-1,1]\times F_{\pm}$ along $\alpha_{\pm}$. Equivalently, we have a pair of annuli $A_{\pm}$ on $(M_1,\gamma_1)$ with the following decomposition
\[
(M_1,\gamma_1)\stackrel{A_+\sqcup A_-}{\leadsto}(M',\gamma')\sqcup ([-1,1]\times F_+, \{0\}\times F_+)\sqcup ([-1,1]\times F_-, \{0\}\times F_-)
\]
See Figure \ref{fig: decomposition along A and S} for a sketch. Hence Proposition \ref{prop: surface decomp and HKM's gluing} applies to $S$ and Case 2 or 3 applies to $A_{\pm}$. 

\begin{figure}[h!]
	\begin{overpic}[width = \textwidth]{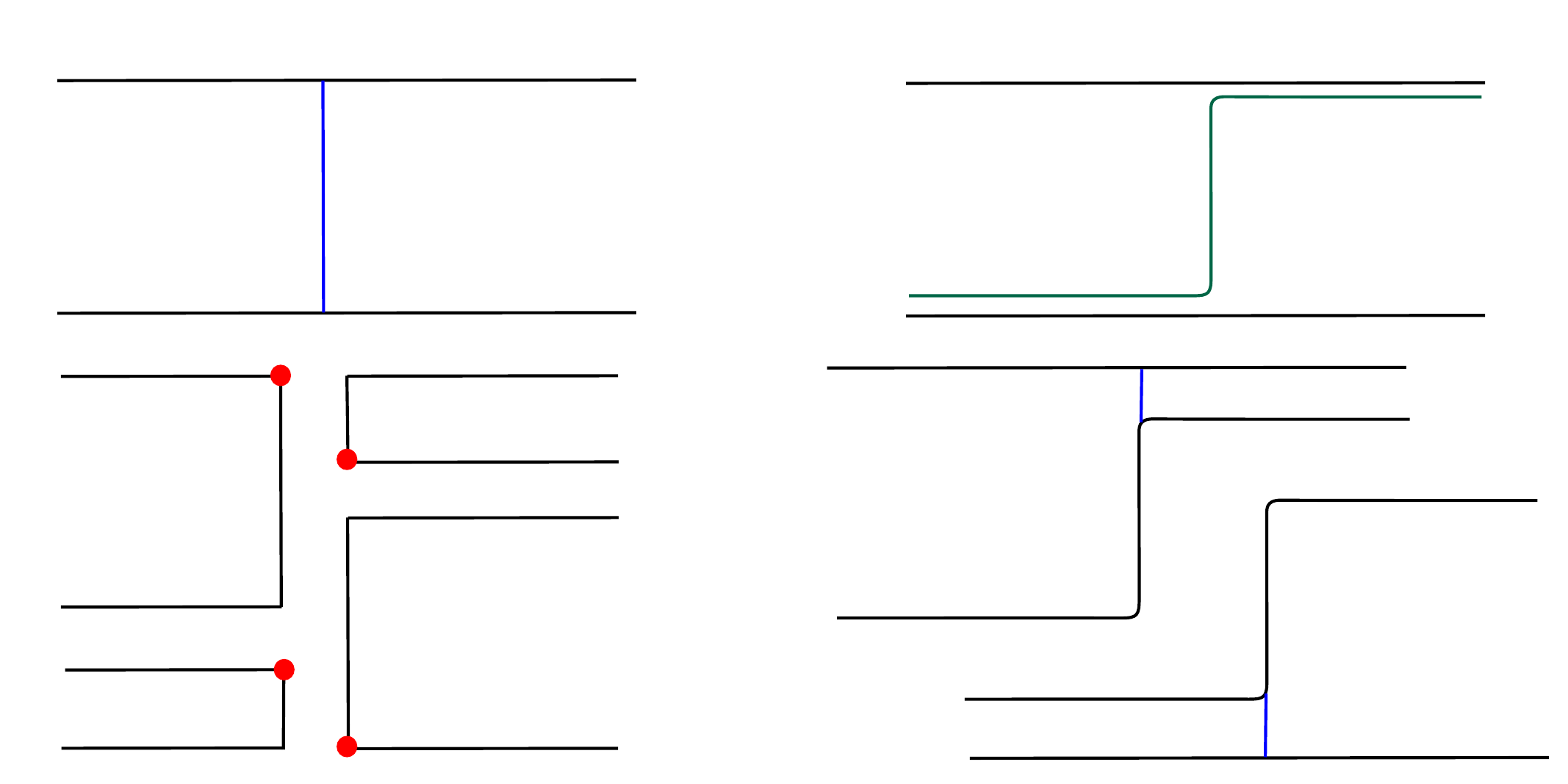}
		\put(17,47){$(M,\gamma)$}
		\put(74,47){$(M,\gamma)$}
		\put(21,37){$A$}
		\put(75,37){$S$}
		\put(30,42.5){$F_+$}
		\put(10,31){$F_-$}
		
		\put(57,18){$(M_1,\gamma_1)$}
		\put(85,9){$(M_1,\gamma_1)$}
		\put(69,24.5){$A_+$}
		\put(81,3){$A_-$}
		
		\put(7,18){$(M',\gamma')$}
		\put(27,9){$(M',\gamma')$}
		\put(24,22.5){$[-1,1]\times F_+$}
		\put(4.5,4){$[-1,1]\times F_-$}
		\put(81,3){$A_-$}
	\end{overpic}
	\caption{Top left: The original annulus $A$ inside $(M,\gamma)$, drawn in blue. Top right: The new well-groomed surface $S$, drawn in green. Bottom left: The result of decomposing along $A_{+}\sqcup A_{-}$. The red dots are the sutures. Bottom right: The new annuli $A_{\pm}$, drawn in blue.}\label{fig: decomposition along A and S}
\end{figure}

{\bf Case 5}. Exactly one component of $\partial A$ is separating on $R_{\pm}(\gamma)$. The argument in Case 4 applies to this case as well.
\epf

The following fact will be useful in later sections.
\begin{lem}\label{lem: solid torus with four longitudes }
Suppose $V$ is a solid torus and $\gamma^{4}$ consists of four longitudes of $V$. Then  $SFH(-V,-\gamma^4)\cong\mathbb{F}^2$ is generated by two contact elements.
\end{lem}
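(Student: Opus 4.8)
\textbf{Proof proposal for Lemma \ref{lem: solid torus with four longitudes }.}

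The plan is to compute $SFH(-V,-\gamma^4)$ first as an abstract $\mathbb{F}$-vector space, and then to exhibit two tight contact structures on $(V,\gamma^4)$ whose contact invariants are linearly independent. For the rank computation, I would decompose $(V,\gamma^4)$ along a meridian disk $D$ of the solid torus. Since $\gamma^4$ consists of four longitudes, $D$ meets $s(\gamma^4)$ in four points, so $D$ is not a product disk; instead, decomposing along $D$ yields a sutured $3$-ball with four parallel sutures on its boundary sphere (equivalently, one obtains a product sutured manifold after one realizes this as $[-1,1]\times P$ where $P$ is a disk with one hole, a pair of pants is too many — a careful bookkeeping of the boundary pattern shows the result is $[-1,1]$ times an annulus). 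Applying Proposition \ref{prop: Juh well-groomed sfh map} (or, after checking the decomposition is along a product annulus once $D$ is split, Proposition \ref{prop: product decomposition}) reduces the computation to that of a product sutured manifold on an annulus, together with the extra generator coming from the non-triviality of the disk decomposition, giving $SFH(-V,-\gamma^4)\cong\mathbb{F}^2$. Alternatively, and perhaps more cleanly, I would invoke the known formula for $SFH$ of a solid torus with $2n$ longitudinal sutures, which has rank $\binom{n-1}{\lfloor (n-1)/2\rfloor}$ or is otherwise standard in the literature (Juh\'asz, Honda--Kazez--Mati\'c); for $n=2$ this gives rank $2$.

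For the second half, I would construct the two contact structures explicitly. Think of $(V,\gamma^4)$ as the neighborhood of a Legendrian knot, or more concretely use convex surface theory: a convex meridian disk $D$ with Legendrian boundary has a dividing set $\Gamma_D$ consisting of arcs, and with four intersection points of $\partial D$ with $s(\gamma^4)$ there are exactly two possible configurations of dividing arcs on $D$ that are "$\partial$-parallel" / tight (no closed components, no boundary-parallel arc cutting off a disk violating tightness), namely the two ways to pair up four boundary points by nested arcs. Each such configuration determines, by Honda's classification/gluing (\cite{honda2000classification}, \cite{honda2002gluing}), a tight contact structure $\xi_1,\xi_2$ on $(V,\gamma^4)$. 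By Proposition \ref{prop: product disk decomposition and HKM's gluing} applied after decomposing along $D$ (which turns $(V,\gamma^4)$ into a sutured ball that is a product), each $c(\xi_i)$ is the image under $\iota_D$ of the generator of the $SFH$ of the product piece, and tracking the two distinct dividing sets shows $c(\xi_1)$ and $c(\xi_2)$ land in different summands, hence are linearly independent. Since $SFH(-V,-\gamma^4)$ has rank $2$, they form a basis, which gives the claim that $SFH(-V,-\gamma^4)$ is generated by two contact elements.

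I expect the main obstacle to be the careful bookkeeping in the decomposition along the meridian disk: keeping precise track of how the four sutures $\gamma^4$ interact with $\partial D$, orienting everything so that the resulting sutured manifold is genuinely recognized as a product (or a product after trivial product-annulus decompositions), and verifying that the two candidate dividing sets on $D$ really do yield tight — as opposed to overtwisted — contact structures and genuinely distinct contact classes rather than the same class counted twice. A secondary subtlety is confirming that the map $\iota_D$ (or the composition of maps from Propositions \ref{prop: product disk decomposition and HKM's gluing} and \ref{prop: surface decomp and HKM's gluing}) does not identify the two contact elements; this should follow from the fact that the two dividing sets on $D$ are not isotopic rel boundary, but it deserves an explicit argument. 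Everything else — the rank computation via a known formula, and the non-vanishing of contact invariants of tight contact structures built from product pieces by contact handle attachments — is routine given the propositions already established.
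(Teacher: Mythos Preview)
Your approach is essentially the same as the paper's, but there are two points where your write-up is muddled and one of them is exactly the issue you flag as a ``secondary subtlety.''

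First, the meridian disk $D$ hits $s(\gamma^4)$ in four points, so it is \emph{not} a product disk and Proposition~\ref{prop: product disk decomposition and HKM's gluing} does not apply. What you want instead is to treat $D$ (with a choice of orientation) as a well-groomed surface in the sense of Definition~\ref{defn: well-groomed} and use Proposition~\ref{prop: surface decomp and HKM's gluing}. The decomposition of $(V,\gamma^4)$ along $D$ yields $(D^3,\delta)$ with a single connected suture $\delta$, i.e.\ the product sutured manifold on a disk --- not a ball with four sutures or an annulus product as you speculate. This immediately gives one tight contact structure on $(V,\gamma^4)$ with nonzero contact element via $\iota_D$. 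The second contact structure comes from decomposing along $-D$; your ``two nested pairings of boundary points'' on the convex disk are precisely the dividing sets compatible with the two orientations of $D$ in the sense of \cite{honda2002convex}.

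Second, the point you correctly worry about --- why the two contact elements are linearly independent --- is not resolved just by the dividing sets being non-isotopic. The paper's argument is that the images of $\iota_D$ and $\iota_{-D}$ lie in \emph{different} spin${}^c$ summands of $SFH(-V,-\gamma^4)$: this is exactly the content of the proof of \cite[Theorem~1.3]{juhasz2008floer}, where the decompositions along $S$ and $-S$ pick out the extremal outer spin${}^c$ structures. Once you know this, linear independence is automatic. For the rank, the paper simply cites \cite[Section~9]{juhasz2010polytope}; your alternative of invoking a known formula is fine, but the meridian-disk discussion you sketch does not by itself compute the rank.
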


\bpf
From \cite[Section 9]{juhasz2010polytope} we know that $SFH(-V,-\gamma^4)\cong\mathbb{F}^2$. It remains to find two tight contact structures whose contact elements generate $SFH(-V,-\gamma^4)\cong\mathbb{F}^2$.  Note that we can take an oriented meridian disk $D\subset V$ that intersects $s(\gamma^4)$ at four points. The decomposition along $D$ and $-D$ both yield $D^3$ with a connected suture, which supports a unique tight contact structure. The proof of Proposition \ref{prop: product decomposition} in \cite{juhasz2008floer} ensures that the two decompositions correspond to different summands of $SFH(-V,-\gamma^4)$, and Proposition \ref{prop: surface decomp and HKM's gluing} then yields two different contact structures on $(V,\gamma^4)$, whose contact elements generate the corresponding summands.
\epf

\subsection{Sutured knot complements}\label{sec: sutured knot complement}

We will assume all knots in the paper to be (integrally) null-homologous.

\blem\label{lem: tb and suture}
Let $\xi$ be a contact structure on $Y$, and $K$ be a Legendrian knot in $Y$. Suppose $\tb(K) = n$, and $N(K)$ is a standard contact neighborhood of $K$. Perturb $\partial (Y\backslash N(K))$ to make it a convex surface with respect to $\xi|_{Y\backslash N(K)}$. Then the dividing curve on $\partial(Y\backslash N(K))$ consists of two simple closed curves on $\partial(Y\backslash N(K))$ of slope $-n$.
\elem

\bpf
After we put the standard tight contact structure on $N(K)$, the Seifert framing agrees with the torus framing given by the longitude, thus the twisting number with respect to the torus framing is the same as the $\tb(K)=n$, which implies the two dividing curves on the boundary of $N(K)$ have slope $n$. Since we are using the standard map to glue back in $N(K)$, $\partial N(K)=-\partial(Y\backslash N(K))$ also has a dividing slope $n$, which means $\partial(Y\backslash N(K))$ has a dividing slope $-n$.
\epf

Following the notion of \cite{etnyre2017sutured}, we define the suture $\Gamma_n$ to be the disjoint union of a pair of oppositely oriented simple closed curves of slope $-n$, with respect to the Seifert framing of  knots. Also, let $\Gamma_{\infty}$ denote the union of two meridians. Let $S\subset Y\backslash N(K)$ be a Seifert surface of $K$; then one can derive a grading on $SFH(Y\backslash N(K),\Gamma_{n})$ by taking one half of the evaluation of the first Chern class of relative spin${}^c$ structures on $Y\backslash N(K)$ with some fixed boundary trivialization and the fundamental class of $S$. Write this grading as $SFH(Y\backslash N(K),\Gamma_{n},i)$. We can normalize the grading so that the following holds.
\begin{itemize}
	\item We have $i\in\intg$ for $n$ odd or $n=\infty$, and $i\in\intg+\frac{1}{2}$ when $n$ even.
	\item For $|i|>g(K)+\frac{|n|-1}{2}$ with $n\in\intg$ or $|i|>g(K)$ with $n=\infty$, we have
	\begin{equation}\label{eq: adjunction}
		SFH(Y\backslash N(K),\Gamma_{n},i) = 0
	\end{equation}
	\item  For $|i|=g(K) + \frac{|n|-1}{2}$ with $n\in\intg$ or $|i|=g(K)$ with $n=\infty$, we have
	\[
	SFH(Y\backslash N(K),\Gamma_{n},i) \neq 0
	\]
\end{itemize}
Note that for any $n$, the decompositions of $(Y\backslash N(K),\Gamma_n)$ along $S$ and $-S$ both yield taut sutured manifolds. Hence the non-vanishing fact follows from \cite[Theorem 1.3 and Theorem 1.4]{juhasz2008floer}. We then choose the normalization such that these two gradings are symmetric. The vanishing fact follows from the adjunction inequality \cite[Theorem 4.1]{juhasz2010polytope} and the spin${}^c$ conjugacy.

On the knot complement $Y\backslash N(K)$ with $\Gamma_n$ sutures, we have two bypass attachments. There are two different descriptions of bypass attachments, originally introduced by Honda \cite{honda2000classification}, which we summarize as follows.
\begin{itemize}
	\item \cite{etnyre2017sutured} follows the original description from Honda, which we will also use in the current paper. In particular, the pair of bypasses corresponds to the two minimal twisting tight contact structures on $(S^1\times S^1\times [0,1],\gamma_{st})$, where $\gamma_{st}$ consists of two parallel copies of $\{pt\}\times S^1\times\{0\}$ and two parallel copies of $S^1\times \{pt\}\times \{1\}$. \cite{etnyre2017sutured} also provides a proof of the bypass exact triangle which was originally introduced in \cite{honda2000bypass}.
	\item Ozbagci \cite{ozbagci2011contact} interpret a bypass as the composition of a contact 1-handle and a contact 2-handle. \cite{juhasz1803contact} further construct maps for $SFH$ associated to contact handle attachments. \cite[Section 5]{baldwin2016contact} also used this description to construct corresponding contact handle attaching maps in monopole and instanton setups.
\end{itemize}
Note that these two bypass attachments will change the suture from $\Gamma_n$ to $\Gamma_{n+1}$, and these coincide with the positive stabilization and negative stabilization on the Legendrian knot $K$, respectively. \cite{honda2000bypass} established an exact triangle, which is re-proved in \cite[Theorem 6.1]{etnyre2017sutured}. The triangle applied to the suture $\Gamma_n$ reads as follows. Note that we have two bypasses that can be attached to $\Gamma_n$ and hence have two triangles.

\[
\xymatrix{
SFH(-(Y\backslash N(K)),-\Gamma_n)\ar[rr]^{\psi^{n}_{+,n+1}}&&SFH(-(Y\backslash N(K)),-\Gamma_{n+1})\ar[d]^{\psi^{n+1}_{+,\infty}}\\
&&SFH(-(Y\backslash N(K)),-\Gamma_{\infty})\ar[ull]^{\psi^{\infty}_{+,n}}
}
\]
\[
\xymatrix{
SFH(-(Y\backslash N(K)),-\Gamma_n)\ar[rr]^{\psi^{n}_{-,n+1}}&&SFH(-(Y\backslash N(K)),-\Gamma_{n+1})\ar[d]^{\psi^{n+1}_{-,\infty}}\\
&&SFH(-(Y\backslash N(K)),-\Gamma_{\infty})\ar[ull]^{\psi^{\infty}_{-,n}}
}
\]

The change of the relative first Chern class of a bypass attachment can be understood explicitly, and hence the maps $\psi_{\pm}$ has a degree and the above triangles have graded versions. The degrees of the maps $\psi^{n}_{\pm,n+1}$ have been computed to be $\mp\frac{1}{2}$ as in \cite[Section 3.3]{etnyre2017sutured}. The degrees of other maps in the triangles can be understood similarly. (Or one could refer to \cite[Section 4]{li2019direct} for the computation of the degrees of these maps in the monopole setup. The same argument can be applied to the Heegaard Floer setup as well.) We record the graded version of the triangles in the following lemma for future references.
\blem\label{lem: bypass exact triangle}
For a knot $K\subset Y$ of genus $g$, we have the following two exact triangles.
\[
\xymatrix{
SFH(-(Y\backslash N(K)),-\Gamma_n,i+\frac{1}{2})\ar[rr]^{\psi^{n}_{+,n+1}}&&SFH(-(Y\backslash N(K)),-\Gamma_{n+1},i)\ar[d]^{\psi^n_{+,\infty}}\\
&&SFH(-(Y\backslash N(K)),-\Gamma_{\infty},i-\frac{n}{2})\ar[ull]^{\psi^{\infty}_{+,n}}
}
\]
\[
\xymatrix{
SFH(-(Y\backslash N(K)),-\Gamma_n,i-\frac{1}{2})\ar[rr]^{\psi^{n}_{-,n+1}}&&SFH(-(Y\backslash N(K)),-\Gamma_{n+1},i)\ar[d]^{\psi^n_{-,\infty}}\\
&&SFH(-(Y\backslash N(K)),-\Gamma_{\infty},i+\frac{n}{2})\ar[ull]^{\psi^{\infty}_{-,n}}
}
\]
\elem

A technical result that will be useful later is the following.
\blem\label{lem: psi^0_1 is surjective on top grading}
Suppose $K\subset Y$ is a (null-homologous) knot that does not bounds a disk. Then the map
\[
\psi^{0}_{-,1}: SFH (-Y\backslash N(K),-\Gamma_0,g(K)-\frac{1}{2}) \to SFH (-Y\backslash N(K),-\Gamma_1,g(K))
\]
is surjective.
\elem

\bpf
Let $S\subset Y$ be a minimal-genus Seifert surface of $K$. We have a taut balanced sutured manifold decomposition
\begin{equation}\label{eq: decomposition of Gamma_0}
	(Y\backslash N(K),\Gamma_0)\stackrel{S}{\leadsto}(Y\backslash N(S),\gamma^3),
\end{equation}
where $s(\gamma^3)$ consists of three copies of $\{0\}\times \partial F \subset \partial N(S)$. Note that, other than $\Gamma_0$, the decomposition is different. More explicitly, for $n\in(\intg-\{0\})\cup\{\infty\}$, we have a sutured manifold decomposition
\[
(Y\backslash N(K),\Gamma_n)\stackrel{S}{\leadsto}(Y\backslash N(S),\gamma^1),
\]
where $s(\gamma^1) = \{0\}\times \partial F \subset \partial N(S)$. \cite[Theorem 1.3]{juhasz2008floer} then implies the following.
\begin{equation} \label{eq: Gamma_infty top grading}
    SFH(-(Y\backslash N(K)),-\Gamma_{\infty}),g(K))\cong SFH(-(Y\backslash N(S)),-\gamma^1)
\end{equation}
and 
\begin{equation}\label{eq: top grading}
	SFH(-(Y\backslash N(K)),-\Gamma_n,g(K)+\frac{n-1}{2})\cong\left\{
\begin{matrix}
	SFH(-(Y\backslash N(S)),-\gamma^3)&n=0\\
	&\\
	SFH(-(Y\backslash N(S)),-\gamma^1)&\text{$n\neq 0$ and $n\neq \infty$}\\
\end{matrix}
\right. 
\end{equation}

To relate these two Floer homology groups, when $K$ does not bound disk that is when the boundary of the surface complement is not spherical with one suture \cite[Proposition 9.2]{juhasz2010polytope} implies that 
\begin{equation}\label{eq: double dimension}
	SFH(-(Y\backslash N(S)),-\gamma^3)\cong SFH(-(Y\backslash N(S)),-\gamma^1)\otimes \mathbb{F}^2.
\end{equation} (Note in the original paper this trivial exception was not excluded in the statement of the proposition) 

Applying Lemma \ref{lem: bypass exact triangle} with $n=0$ and $i=g(K)$, we have a triangle
\[
\xymatrix{
SFH(-(Y\backslash N(K)),-\Gamma_0,g(K)-\frac{1}{2})\ar[rr]^{\psi^{0}_{-,1}}&&SFH(-(Y\backslash N(K)),-\Gamma_{1},g(K))\ar[d]^{\psi^1_{-,\infty}}\\
&&SFH(-(Y\backslash N(K)),-\Gamma_{\infty},g(K))\ar[ull]^{\psi^{\infty}_{-,1}}
}.
\]
Equations (\ref{eq: Gamma_infty top grading})(\ref{eq: top grading}) and (\ref{eq: double dimension}) then imply that $\psi^0_{-,1}$ is surjective on $SFH(-(Y\backslash N(K)),-\Gamma_0,g(K)-\frac{1}{2})$.
\epf

In \cite{etnyre2017sutured}, Etnyre, Vela-Vick, and Zarev constructed the following direct system:
\begin{equation}\label{eq: direct limit}
\xymatrix{
\cdots\ar[r]&SFH(-(Y\backslash N(K)),-\Gamma_n)[\frac{n-1}{2}]\ar[r]^{\psi_{-,n+1}^n}&SFH(-(Y\backslash N(K)),-\Gamma_{n+1})[\frac{n}{2}]\ar[r]^{\quad\quad\quad\quad\quad\quad\psi^{n+1}_{-,n+2}}&\cdots\\
}
\end{equation}
It can be extended to the following commutative diagram:
\begin{equation}\label{eq: U map}
\xymatrix{
\ar[r]&SFH(-(Y\backslash N(K)),-\Gamma_n)[\frac{n-1}{2}]\ar[r]^{\psi_{-,n+1}^n}\ar[d]^{\psi^{n}_{+,n+1}}&SFH(-(Y\backslash N(K)),-\Gamma_{n+1})[\frac{n}{2}]\ar[r]^{\quad\quad\quad\quad\quad\quad\psi^{n+1}_{-,n+2}}\ar[d]^{\psi^{n+1}_{+,n+2}}&\\
\ar[r]&SFH(-(Y\backslash N(K)),-\Gamma_{n+1})[\frac{n}{2}]\ar[r]^{\psi_{-,n+2}^{n+1}}&SFH(-(Y\backslash N(K)),-\Gamma_{n+2})[\frac{n+1}{2}]\ar[r]^{\quad\quad\quad\quad\quad\quad\quad\psi^{n+2}_{-,n+3}}&\\
}
\end{equation}

For any suture $\Gamma_{n}$ for $n\in\intg$, one can attach a contact $2$-handle along a meridian of the knot. The resulting sutured manifold is $Y(1)=(Y-B^3,\delta)$, where $s(\delta)$ is a connected simple closed curve on $\partial B^3$. As in \cite[Proposition 9.1]{juhasz2006holomorphic}, we know that $SFH(Y(1)) = \widehat{HF}(Y)$. The functoriality of contact gluing map in \cite[Proposition 6.2]{honda2008contact} yields the following commutative diagram
\begin{equation}\label{eq: pi map}
\xymatrix{
\cdots\ar[r]&SFH(-(Y\backslash N(K)),-\Gamma_n)[\frac{n-1}{2}]\ar[r]^{\psi_{-,n+1}^n}\ar[d]^{\pi^n}&SFH(-(Y\backslash N(K)),-\Gamma_{n+1})[\frac{n}{2}]\ar[r]^{\quad\quad\quad\quad\quad\quad\psi^{n+1}_{-,n+2}}\ar[dl]^{\pi^{n+1}}&\cdots\\
&SFH(-Y(1))=\widehat{HF}(-Y)&&
}
\end{equation}

\bthm[{\cite[Theorem 1.1 and Theorem 1.3]{etnyre2017sutured}}]\label{thm: direct limit = HFK^-}
The direct limit of (\ref{eq: direct limit}) is isomorphic to $HFK^-(-Y,{K})$ via a grading-preserving isomorphism $\mathcal{I}$. Furthermore, under this isomorphism, we have the following.
\begin{itemize}
	\item The collection of maps $\{\psi^{n}_{+,n+1}\}$ in (\ref{eq: U map}) induces a map on the direct limit that coincides with the $U$ action on $HFK^-(-Y,{K})$.
	\item The collection of maps $\{\pi^n\}$ in (\ref{eq: pi map}) induces a map on the direct limit that coincides with the map $\pi_*$ which is the map induced on homology by setting the formal variable U equal to one at the chain level, and fits into the following exact triangle.
	\begin{equation}\label{eq: cone of U-1}
		\xymatrix{
	HFK^-(-Y,K)\ar[rr]^{U-1}&&HFK^-(-Y,K)\ar[dl]^{\pi_*}\\
	&\widehat{HF}(-Y)\ar[ul]&
	}
	\end{equation}
\end{itemize}
\ethm

\subsection{The $\tau$ invariant in Heegaard Floer theory}\label{subsec: tau in HF}
The $\tau$ invariant in Heegaard Floer theory was first introduced by Ozsv\'ath and Szab\'o in \cite{ozsvath2003tau}. It defines concordance homomorphism
\[
	\tau: \mathcal{C}\to\mathbb{Z}
\]
where $\mathcal{C}$ denotes the smooth cocordance group of knots in $S^3$. There is another interpretation from \cite[Theorem 1.5]{ozsvath2008legendrian} and \cite[Section 2.5]{manolescu2014lecturenotes} that will be used in the current paper. For a knot $K\subset S^3$, we have
\[
\tau(K) = \max\{i~|~\exists x\in {HFK}^-(-S^3,K,i)~s.t.~\pi_{*}(x) \neq 0\}.
\]
Here $\pi_*$ is the map as in (\ref{eq: cone of U-1}) for $Y=S^3$. Note in the literature, there might be a negative sign in the definition but we work with $K\subset -S^3$ which is equivalent to the mirror of $K$ in $S^3$. Also, the definition might involve the free tower of $HFK^-$, but the elements in the free tower are exactly the ones that survive under the map $\pi_*$.

We have the following lemma. 
\blem\label{lem: pi^0 is non-trivial on top grading}
If $K\subset S^3$ is not the unknot, and $\tau(K) = g(K)$. Then the map
\[
\pi^0: SFH(-S^3\backslash N(K),-\Gamma_0,g(K)-\frac{1}{2}) \to \widehat{HF}(-S^3)
\]
is non-trivial.
\elem

\bpf
It suffices to show that the map
\[
\pi^1: SFH(-S^3\backslash N(K),-\Gamma_1,g(K)) \to \widehat{HF}(-S^3)
\]
is non-trivial. Then it is clear that the lemma follows from Equation \ref{eq: pi map} and Lemma \ref{lem: psi^0_1 is surjective on top grading}.

To show that $\pi^1$ is non-trivial on the top grading, we applying Lemma \ref{lem: bypass exact triangle} with $n\geq 1$ and $i=g(K)+\frac{n}{2}$, and we derive a triangle
\[
\xymatrix{
SFH(-(S^3\backslash N(K)),-\Gamma_n,g(K)+\frac{n-1}{2})\ar[rr]^{\psi^{n}_{-,n+1}}&&SFH(-(S^3\backslash N(K)),-\Gamma_{n+1},g(K)+\frac{n}{2})\ar[d]^{\psi^{n+1}_{-,\infty}}\\
&&SFH(-(S^3\backslash N(K)),-\Gamma_{\infty},g(K)+n)\ar[ull]^{\psi^{\infty}_{-,n}}
}.
\]
The adjunction inequality in (\ref{eq: adjunction}) then implies that the map $\psi^n_{-,n+1}$ is an isomorphism on $SFH(-(S^3\backslash N(K)),-\Gamma_n,g(K)+\frac{n-1}{2})$ for any $n\geq 1$. Hence the direct system (\ref{eq: direct limit}) stabilizes on the top grading, and we have a commutative diagram according to Theorem \ref{thm: direct limit = HFK^-}
\begin{equation}\label{eq: pi_1 and pi_star}
	\xymatrix{
SFH(-(S^3\backslash N(K)),-\Gamma_1,g(K))\ar[rr]^{\quad\quad\quad\mathcal{I}}_{\quad\quad\quad\cong}\ar[dr]^{\pi^1}&&HFK^-(-S^3,K,g(K))\ar[dl]^{\pi_*}\\
&\widehat{HF}(-S^3)&
}
\end{equation}
The condition $\tau(K)= g(K)$ implies that $\pi_*$ is non-trivial in top grading. Hence, we are done.
\epf

\section{Constructing $\rm tb=0$ representatives} \label{sec: constructing tb}
In this section, we construct $\tb=0$ representatives for various families of knots.

\subsection{Nearly fibered knot with $\tau = g$}
In this subsection, we aim to prove Theorem \ref{thm: nearly fibered knot}.

\blem\label{lem: top grading of a nearly fibered knot is generated by contact elements}
Let $K\subset S^3$ be a nearly fibered knot, {\it i.e.}
	\[
	\widehat{HFK}(S^3,K,g(K)) \cong \mathbb{F}^2.
	\]
Then $SFH(-S^3\backslash N(K), -\Gamma_0,g(K)-\frac{1}{2})$ is generated by contact elements.
\elem

\bpf
Let $S$ be a minimal genus Seifert surface of $K$. As in Equation (\ref{eq: decomposition of Gamma_0}), we know that there is a well-groomed decomposition
\[
(S^3\backslash N(K),\Gamma_0)\stackrel{S}{\leadsto} (S^3\backslash N(S),\gamma^3)
\]
where $\gamma^3$ consists of three parallel copies of $\partial S$. Furthermore, according to Proposition \ref{prop: Juh well-groomed sfh map} there is an injective map
\[
\iota_S: SFH(-S^3\backslash N(S),-\gamma^3) \to SFH(-S^3\backslash N(K),-\Gamma_0)
\]
which is an isomorphism. As a result, by Proposition \ref{prop: surface decomp and HKM's gluing}, it suffices to show that $SFH(-S^3\backslash N(S),-\gamma^3)$ is generated by contact elements.

We now regard $N(S)$ as $[-2,2]\times \partial S$ and $\gamma^3 = \{-1,0,1\}\times \partial S$. We can now take an annulus $A_0 = [-2,2]\times \partial S$ and push it out to obtain a product annulus inside $(S^3\backslash N(S),\gamma^3)$. A decomposition yields
\[
(S^3\backslash N(S),\gamma^3)\stackrel{A_0}{\leadsto}(V,\gamma^4) \sqcup (S^3\backslash N(S),\gamma^1 = \{0\}\times \partial S).
\]

By \cite[Proposition 9.2]{juhasz2010polytope}, we have
\[
SFH(-S^3\backslash N(S),-\gamma^3) \cong SFH(-S^3\backslash N(S),-\gamma^1)\otimes \mathbb{F}^2.
\]
Since we already know that $SFH(-V,-\gamma^4)\cong \mathbb{F}^2$ and is generated by contact elements, as in Lemma \ref{lem: solid torus with four longitudes }, the map $\iota_{A_0}$ from Proposition \ref{prop: product annulus decomposition and HKM's gluing} is an isomorphism and it suffices to show that $SFH(-S^3\backslash N(S),-\gamma^1)$ is generated by contact elements. 

According to \cite{LY2023nearly}, we know that there is a collection of pair-wise disjoint non-trivial product annuli $\mathcal{A}\subset (S^3\backslash N(S),\gamma^1)$ such that there is a decomposition
\[
(S^3\backslash N(S),\gamma^1)\stackrel{\mathcal{A}}{\leadsto} (M,\gamma) \sqcup ([-1,1]\times F,\{0\}\times \partial F).
\]
Here $F$ is a compact oriented surface so that each component of $F$ has non-empty boundary, and $(M,\gamma)$ is in one of the following three cases.
\begin{itemize}
	\item [(i)] $(M,\gamma) = (V,\gamma^4)$.
	\item [(ii)] $M = V$ and $\gamma$ consists of two curves of slope $2$. 
	\item [(iii)] $M=S^3\backslash N(T)$ for the right handed trefoil $T$ and $\gamma$ consists of two curves of slope $2$. 
\end{itemize}
It is straightforward to compute that in any of the three cases
\[
SFH(-M,-\gamma) \cong \mathbb{F}^2
\]
so the map $\iota_{\mathcal{A}}$ from Proposition \ref{prop: product annulus decomposition and HKM's gluing} is an isomorphism. Again,  it has been known that $SFH(-V,-\gamma^4)$ is generated by contact elements, and the fact that in case (ii) $SFH(-M,-\gamma)$ is generated by contact elements follows from a similar argument for case (i) (as in the proof of Theorem \ref{thm: Tight contact strucutre from fibered tau}): we can take a meridian disk $D$ of $M=V$ and look at the decomposition along $D$ and $-D$. For case (iii), instead of a meridian disk, we can take a genus-one Seifert surface $S$ of the trefoil knot $T$. Both $S$ and $-S$ are well-groomed surfaces and the decompositions of $(M,\gamma )$ along them both yield a product sutured manifold. As a result we can apply Proposition \ref{prop: surface decomp and HKM's gluing} to obtain two tight contact structures on $(M,\gamma)$ whose contact elements generate $SFH(-M,-\gamma)$. Thus we complete the proof.
\epf
Now we are ready to prove the Theorem \ref{thm: nearly fibered knot}, and we restate it below.
\begin{thm}\label{thm: nearly fibered knots}
	 Let $K\subset S^3$ be a nearly fibered knot, {\it i.e.}
	\[
	\widehat{HFK}(S^3,K,g(K)) \cong \mathbb{F}^2.
	\]
	Suppose  $\tau(K) = g(K)$, then $K$ has a Legendrian representative $L_K$ such that  $tb(L_K)=0$ and realizes the Thurston--Bennequin bound, i.e. $tb(L_K)-rot(L_K) = 2g(K) - 1$.
\end{thm}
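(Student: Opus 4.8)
The plan is to leverage the infrastructure assembled in Section \ref{sec: Preliminary}. By Lemma \ref{lem: top grading of a nearly fibered knot is generated by contact elements}, the group $SFH(-S^3\backslash N(K),-\Gamma_0,g(K)-\frac{1}{2})$ is generated by contact elements $c(\xi_i^0)$ coming from contact structures $\xi_i^0$ on the sutured knot complement $(S^3\backslash N(K),\Gamma_0)$. On the other hand, Lemma \ref{lem: pi^0 is non-trivial on top grading} tells us that, since $\tau(K) = g(K)$ and $K$ is non-trivial, the map
\[
\pi^0: SFH(-S^3\backslash N(K),-\Gamma_0,g(K)-\tfrac{1}{2}) \to \widehat{HF}(-S^3)
\]
is non-trivial. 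Since $\pi^0$ is linear and the domain is spanned by the $c(\xi_i^0)$, there must be at least one index $i_0$ with $\pi^0(c(\xi_{i_0}^0)) \neq 0$.

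First I would recall that $\pi^0$ is realized as a contact gluing map (the contact $2$-handle attachment along the meridian of $K$, \emph{c.f.} \cite{juhasz1803contact} and Equation (\ref{eq: pi map})), and that contact handle attachment maps send contact elements to contact elements. Hence $\pi^0(c(\xi_{i_0}^0)) = c(\xi)$ where $\xi$ is the contact structure on $Y(1) = (S^3\backslash B^3,\delta)$ obtained from $\xi_{i_0}^0$ by attaching the contact $2$-handle; equivalently, capping off gives a genuine contact structure on $S^3$. Since $\pi^0(c(\xi_{i_0}^0)) \neq 0$, this contact structure on $S^3$ has non-vanishing Heegaard Floer contact invariant, so it is tight; as $S^3$ carries a unique tight contact structure, it must be $\xi_{std}$ and $c(\xi_{i_0}^0)$ is (up to the identification $SFH(-Y(1)) \cong \widehat{HF}(-S^3)$) the class $c(\xi_{std})$, which satisfies Equation (\ref{eq: intro, xi_K}).

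Next I would translate the existence of such a $\xi_{i_0}^0$ back into Legendrian geometry. The suture $\Gamma_0$ has slope $0$, so by Lemma \ref{lem: tb and suture} a contact structure compatible with $(S^3\backslash N(K),\Gamma_0)$ that glues (via attaching a standard contact neighborhood $N(K)$ and then capping) to $(S^3,\xi_{std})$ produces a Legendrian representative $L_K$ of $K$ with $\tb(L_K) = 0$; the non-vanishing of the contact invariant after capping guarantees the glued-up contact structure is genuinely $\xi_{std}$ rather than something overtwisted. Finally, to pin down $\rot(L_K)$ and get the Thurston--Bennequin \emph{equality} $\tb(L_K) - \rot(L_K) = 2g(K)-1$, I would use the grading: $c(\xi_{i_0}^0)$ lives in the top grading $g(K) - \frac{1}{2}$ of $SFH(-S^3\backslash N(K),-\Gamma_0)$, and this grading is exactly the one corresponding to $\overline{\text{sl}} = 2g(K)-1$ under the dictionary between the Alexander/Seifert grading on $SFH$ and the rotation number (as in \cite{etnyre2017sutured}); so the Legendrian $L_K$ produced automatically realizes the three-dimensional Thurston--Bennequin bound. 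Hence $\overline{\tb}(K) \geq \tb(L_K) = 0$ and $K$ realizes the bound.

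**The main obstacle** I anticipate is the bookkeeping in the last step: carefully matching the Seifert-surface grading on $SFH(-S^3\backslash N(K),-\Gamma_0,\,\cdot\,)$ with the rotation number of the corresponding Legendrian representative, and confirming that a contact element sitting in grading $g(K)-\frac{1}{2}$ indeed forces $\tb - \rot = 2g(K)-1$ rather than merely $\tb - \rot \le 2g(K)-1$. This requires invoking the explicit degree computations for the bypass maps $\psi^n_{\pm,n+1}$ recorded in Lemma \ref{lem: bypass exact triangle} (equivalently \cite[Section 3.3]{etnyre2017sutured}) and tracking how the contact invariant of a Legendrian with a given $(\tb,\rot)$ sits inside the direct system (\ref{eq: direct limit}). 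Everything else is a direct assembly of the cited propositions: Lemma \ref{lem: top grading of a nearly fibered knot is generated by contact elements} supplies the generators, Lemma \ref{lem: pi^0 is non-trivial on top grading} supplies non-triviality, naturality of contact handle maps supplies preservation of contact elements, and uniqueness of the tight contact structure on $S^3$ closes the loop.
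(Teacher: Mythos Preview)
Your proposal is correct and follows essentially the same route as the paper: combine Lemma \ref{lem: top grading of a nearly fibered knot is generated by contact elements} with Lemma \ref{lem: pi^0 is non-trivial on top grading} to find a contact structure $\xi_K^0$ on $(S^3\backslash N(K),\Gamma_0)$ with $\pi^0(c(\xi_K^0))\neq 0$, recognize the resulting contact structure on $S^3$ as $\xi_{std}$ by uniqueness, and read off $\tb=0$ from the suture slope. For the step you flag as the obstacle, the paper resolves it exactly as you suspect via the direct system: the class $c(\xi_K^0)=Eh(L_K)$ maps under the limit of (\ref{eq: direct limit}) to the LOSS invariant $\mathcal{L}(L_K)\in HFK^-(-S^3,K,g(K))$ (this is \cite{etnyre2017sutured}), and then \cite[Corollary 1.7]{OzsvathStipsiczContactsurgeryandtransverseinvariant} converts the Alexander grading $g(K)$ of $\mathcal{L}(L_K)$ into the equality $\tb(L_K)-\rot(L_K)=2g(K)-1$.
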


\bpf
Because $\tau(K)=g(K)$, by Lemma \ref{lem: top grading of a nearly fibered knot is generated by contact elements} and Lemma \ref{lem: pi^0 is non-trivial on top grading} we can assume that there exists a tight contact structure $\xi_K$ on $(S^3\backslash N(K),\Gamma_0)$ such that 
\[
	\pi^0(c(\xi_K))\neq 0.
\]
Let $\xi$ be the result of attaching a contact $2$-handle along a meridian of the knot $K$ to $\xi_K$ on $(S^3 \backslash N(K),\Gamma_0)$. Note that $\xi$ is on the sutured manifold $S^3(1)$ obtained by digging out a $3$-ball from $S^3$ and taking a connected curve as the suture. Then the fact that a contact handle attaching map preserving contact element implies that
\[
c(\xi) = \pi^0(c(\xi_K))\neq0.
\]
However, there is a unique tight contact structure on $S^3(1)$, which is the standard one. So $\xi$ corresponds to the standard tight contact structure $\xi_{std}$ on $S^3$.

Moreover, we note that the standard Legendrian neighborhood of a Legendrian knot can be decomposed into a contact $0$-handle and a contact $1$-handle. Gluing it to the knot complement flips the handles and makes them a contact $3$-handle and a contact $2$-handle. To obtain $S^3$, the $2$-handle must be glued along the meridian of $K$, and this gluing coincides with the construction of $\pi^0$. Therefore, we conclude that the knot $L_K$ as the core of the neighborhood is Legendrian with respect to $\xi_{std}$ on $S^3$ since the slope of dividing set is $0$, Lemma \ref{lem: tb and suture} implies that $\tb(L_K)=0$. 

To show that $L_K$ realizes the three-dimensional Thurston-Bennequin bound we use the grading of the contact element $c(\xi_K)$. By the definition of the $Eh$ invariant, we know that $c(\xi_K)=Eh(L_k)$ is non-trivial in the top grading. Moreover, according to  \cite{etnyre2017sutured} we know the limit map of the direct system will send $Eh(L_k)$ to the LOSS invariant (\cite{LOSSInvariant}) $\mathcal{L}(L_k)$ of $L_k$. Thus, $c(\xi)$ is non-vanishing together with the map $\pi_*$ in \ref{eq: pi_1 and pi_star} being non-trivial on the top grading tells us that $\mathcal{L}(L_k)$ is non-trivial in grading $g(K)$. Last, \cite[Corollary 1.7]{OzsvathStipsiczContactsurgeryandtransverseinvariant} says that $tb(L_k)-rot(L_k)=2g(K)-1$,  which finishes the proof.  
\epf

\subsection{Knots realizing Thurston-Bennequin bound} \label{subsec: Knots realizing Thurston-Bennequin bound}
In this section, we work with knots $K\subset S^3$ that admit a Legendrian representative $L$ such that
\[
tb(L) - rot(L) = 2g(K) - 1.
\]

\blem\label{lem: contact element on top grading}
Suppose $K$ has a Legendrian representative $L$ realizing the three-dimensional Thurston-Bennequin bound. Assume without loss of generality that $L$ has $tb = -n$ for a large positive integer $n$ (we can always do this by negative stabilizing the Legendrian). Let $N(L)$ be a standard contact neighborhood of $L$, and $\xi_L$ be the restriction of $\xi_{std}$ on $S^3\backslash N(L)$. Then
\[
c(\xi_L) \in SFH(-S^3\backslash N(K),-\Gamma_n,g(K)+\frac{n-1}{2}).
\]
Furthermore, we have
\[
\pi^n(c(\xi_L)) = c(\xi_{std}) \in SFH (-S^3(1)).
\]
\elem

\bpf
As we just saw at the end of the proof of \ref{thm: nearly fibered knots}, $c(\xi_L)=EH(L,\xi_L)$ and the limit of $EH(L,\xi_L)$ under the direct system \ref{eq: direct limit} is the same as the LOSS invariant $\mathcal{L}(L)$  of $L$. Moreover, by \cite[Corollary 1.7]{OzsvathStipsiczContactsurgeryandtransverseinvariant}, the condition $tb(L)-rot(K)=2g(K)-1$ implies that the Alexander grading of $\mathcal{L}(L)$ is $g(K)$ . Since under the direct system $(-S^3\backslash N(K),-\Gamma_n,g(K)+\frac{n-1}{2})$ maps to $HFK^-(-Y,K,g(K))$,  $EH(L,\xi_L)$ maps to $\mathcal{L}(L)$, and  $\mathcal{L}(L)\in HFK^-(-Y.K.g(K))$, we must have $EH(L,\xi_L)=c(\xi_L) \in SFH(-S^3\backslash N(K),-\Gamma_n,g(K)+\frac{n-1}{2})$.

For the second part, it is more straightforward because the map $\pi^n$ corresponds to contact $2-$handle attachment, and it will sends $c(\xi_L)$ to $c(\xi_{std})$ as $\xi_L$ is the restriction of $\xi_{std}$.
\epf

The point of this Lemma is that to prove Theorem \ref{thm: tb=0 for knot realizeing T-B bound}, we want to find a contact structure $\xi_0$ on $(S^3\backslash N(K),\Gamma_0)$ such that $c(\xi_0) \in SFH(-S^3\backslash N(K),-\Gamma_{0},g(K)-\frac{1}{2})$ maps to $c(\xi_L) \in SFH(-S^3\backslash N(K),-\Gamma_n,g(K)+\frac{n-1}{2})$ in the  direct system \ref{eq: direct limit}. Then the commutative diagram \ref{eq: pi map} will imply that $\pi^0(\xi_0)=\pi^n(c(\xi_L)) = c(\xi_{std})$. Thus, just like the end of the proof of Theorem \ref{thm: nearly fibered knots}, we find the $L_K$ as the core of the solid torus (neighborhood of $K$) with slope $0$ dividing sets.

With the above idea in mind, we are ready to prove Theorem \ref{thm: tb=0 for knot realizeing T-B bound}, and again we restate the theorem below.

\bthm \label{thm: restated 1.1}
Suppose $K$ is a non-trivial knot in $S^3$ and has a Legendrian representative $L$ realizing the three-dimensional Thurston-Bennequin bound. Assume without loss of generality that $L$ has $tb = -n$ for a large positive integer $n$. Then, $K$ has a Legendrian representative $L_K$ such that $tb(L_K) = 0$.
\ethm
\bpf

Let $N(K)$ be a tubular neighborhood of $K$. From Lemma \ref{lem: contact element on top grading}, there exists a large enough $n$ such that there exists a contact structure $\xi_L$ on $(S^3\backslash N(K),\Gamma_n)$ such that
\[
c(\xi_L) \in SFH(-S^3\backslash N(K),-\Gamma_n,g(K)+\frac{n-1}{2})~{\rm and}~\pi^n(c(\xi_L)) = c(\xi_{std}) \in SFH (-S^3(1)).
\]

Following the idea above, we want to construct a $c(\xi_0)$ which is non-zero in $SFH(-S^3\backslash N(K),-\Gamma_{0},g(K)-\frac{1}{2})$ and it maps to $c(\xi_L)$ under the direct system.

We claim that we can pass the construction of contact structure on the knot complement to the contact structure of the Seifert surface complement. 

More precisely, for any $k\geq 1$, as in \cite{honda2000bypass}, for the sutured manifold $(S^3\backslash N(K),\Gamma_k)$, bypass attachments correspond to the $\psi^k_{-,k+1}$ are performed along an arc $\beta$ which has both end points on the suture $\Gamma_k$. We can perform suture decomposition along the (minimal-genus) Seifert surface $S$ of $K$ so that $S\cap \beta = \emptyset$. As a result, the maps associated to the bypass attachments commute with the maps come from decomposition along the surface $S$. More precisely, we have the following commutative diagram
\begin{equation}\label{eq: comm. dia. surf. decomp and bypass}
	\xymatrix{
SFH(-S^3\backslash N(K),-\Gamma_{k},g(K)+\frac{k-1}{2})\ar[rr]^{\psi^k_{-,k+1}}&&SFH(-S^3\backslash N(K),-\Gamma_{k+1},g(K)+\frac{k}{2})\\
SFH(-S^3\backslash N(S),-\gamma^{1})\ar[rr]^{\psi}\ar[u]^{\iota_{S,k}}&&SFH(-S^3\backslash N(S),-\gamma^{1})\ar[u]^{\iota_{S,k+1}}
}
\end{equation}
Here, the vertical maps $\iota_{S,k}$ and $\iota_{S,k+1}$ are the maps from Proposition \ref{prop: surface decomp and HKM's gluing}, and $\gamma^1$ follows the above notations which denotes one copy of $\partial S$ as the suture. Since the bypass arc $\beta$ is disjoint from $S$, it survives under the sutured manifold decomposition.  Hence, it induces a bypass attachment on $(S^3\backslash N(S),\gamma^1)$, resulting in $(S^3\backslash N(S),\gamma^1)$ as well. The map associated to this bypass attachment on $(S^3\backslash N(S),\gamma^1)$ is denoted by $\psi$. See Figure \ref{fig: bypass on Gamma_k}.

\begin{figure}[h!]
	\begin{overpic}[width = 0.8\textwidth]{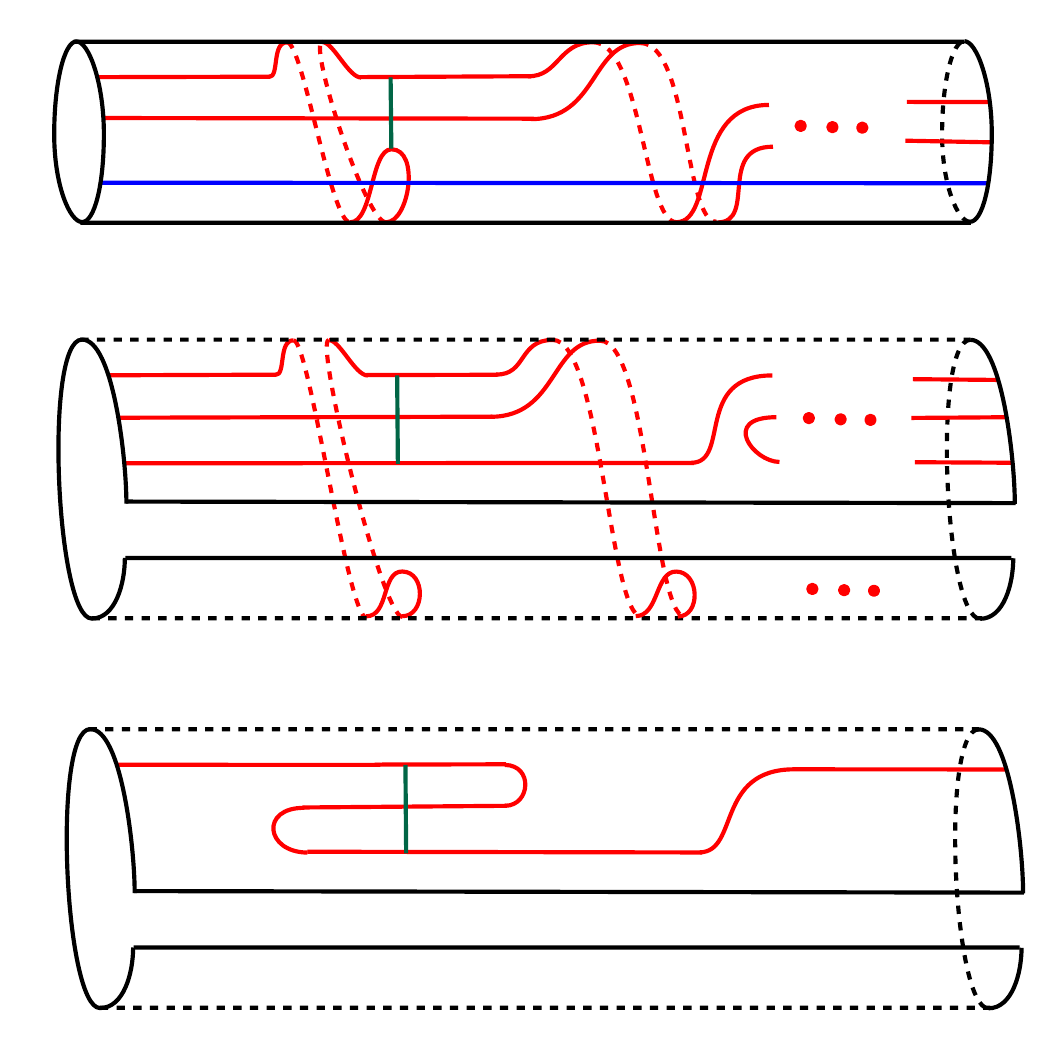}
		\put(38,90) {\color{lygreen}$\beta$}
		\put(96,87.5) {\color{red}$\Gamma_k$}
		\put(96,81.5) {\color{blue}$\partial S$}
		\put(98,57.5) {\color{red}$-\gamma^1$}
	\end{overpic}
	\caption{Top: the sutured manifold $(S^3\backslash N(K),\Gamma_{k})$ for $k\geq 1$. The suture is drawn in red, the Seifert surface $S$ is drawn in blue and the bypass arc $\beta$ is drawn in green. We isotope the suture in the figure, which is equivalent to isotoping $\partial S$ as in the proof. Middle: the sutured manifold $(-S^3\backslash N(S),-\gamma^1)$ after decomposing along $S$. Bottom: the suture after isotopy. It is then obvious that the bypass attachment is trivial.}\label{fig: bypass on Gamma_k}
\end{figure}

Now, in our situation, we decompose the contact structure $\xi_L$ on $(S^3\backslash N(K),\Gamma_n)$ along the convex surface $S$ to obtain a tight contact structure $\xi_S$ on $(S^3\backslash N(S),\gamma^1)$. Then, we can further glue $\xi_S$ along $S$ back to obtain a tight contact structure $\xi_k$ on $(S^3\backslash N(K),\Gamma_k)$ for each $k\geq 1$, where the gluing map corresponds to $\iota_{S,k}$, and Proposition \ref{prop: surface decomp and HKM's gluing} implies that 
\[
\iota_{S,k}(c(\xi_S)) = c(\xi_k)\in SFH(-S^3\backslash N(K),-\Gamma_{k},g(K)+\frac{k-1}{2})
\]
for any $k\geq 1$.

It is straightforward to check that the bypass arc $\beta$ on $(S^3\backslash N(S),\gamma^1)$ is a trivial one; as a result, we know from \cite{honda2002gluing} that the bypass attachment along $\beta$ preserves $\xi_S$, and hence
\[
\psi (c(\xi_S)) = c(\xi_{S}).
\]
From the commutativity of the Equation \ref{eq: comm. dia. surf. decomp and bypass}, we then conclude that
\[
\psi^{k}_{-,k+1}(c(\xi_k)) = c(\xi_{k+1}).
\]
From Equation (\ref{eq: pi map}), and the fact that
\[
\pi^n(c(\xi_L)) = c(\xi_{std}) \in SFH(-S^3(1))
\]
we know that
\begin{equation} \label{eq: pi^n of xi_k}
\pi^1(c(\xi_1)) = \pi^2(c(\xi_2)) = \cdots = \pi^n(c(\xi_L)) = c(\xi_{std}) \in SFH(-S^3(1)).
\end{equation}

Take $k=0$ now, we have a similar commutative diagram
\begin{equation}\label{eq: comm. dia. surf. decomp and bypass, k=0}
	\xymatrix{
SFH(-S^3\backslash N(K),-\Gamma_{0},g(K)-\frac{1}{2})\ar[rr]^{\psi^0_{-,1}}&&SFH(-S^3\backslash N(K),-\Gamma_{1},g(K))\\
SFH(-S^3\backslash N(S),-\gamma^{3})\ar[rr]^{\psi}\ar[u]^{\iota_{S,0}}&&SFH(-S^3\backslash N(S),-\gamma^{1})\ar[u]^{\iota_{S,1}}
}
\end{equation}
See Figure \ref{fig: bypass on Gamma_0}.

\begin{figure}[h!]
	\begin{overpic}[width = 0.8\textwidth]{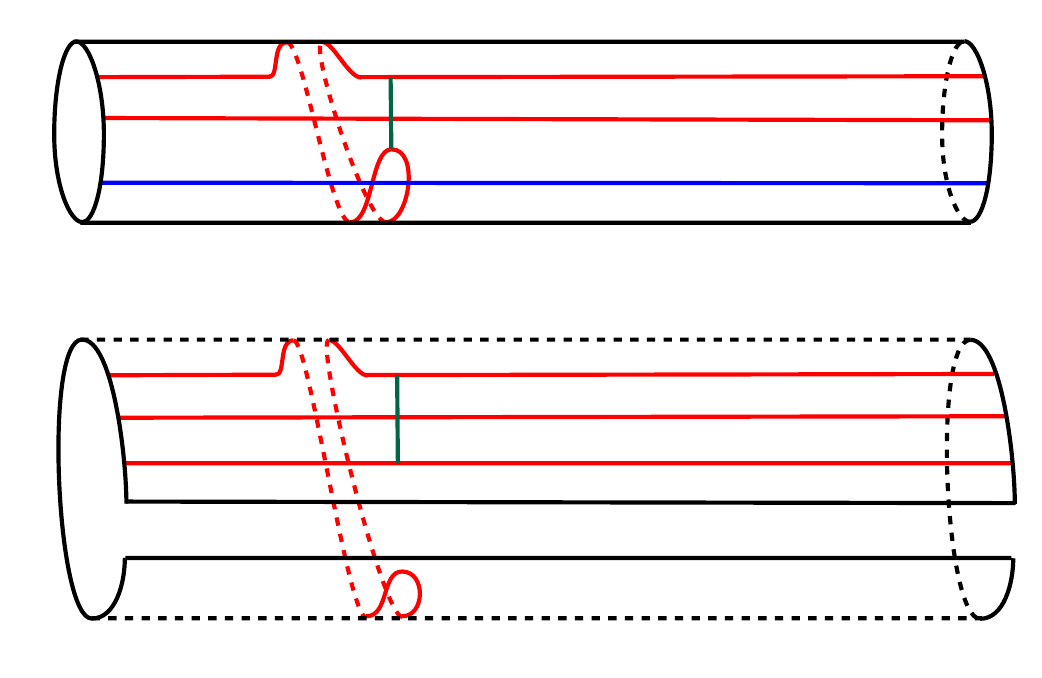}
		\put(38,55) {\color{lygreen}$\beta$}
		\put(96,55) {\color{red}$\Gamma_0$}
		\put(96,46.5) {\color{blue}$\partial S$}
		\put(98,25) {\color{red}$-\gamma^3$}
	\end{overpic}
	\caption{Top: the sutured manifold $(S^3\backslash N(K),\Gamma_{0})$. The suture is drawn in red, the Seifert surface $S$ is drawn in blue and the bypass arc $\beta$ is drawn in green. We isotope the suture in the figure, which is equivalent to isotope $\partial S$ as in the proof. Bottom: the sutured manifold $(-S^3\backslash N(S),-\gamma^3)$ after decomposing along $S$.}\label{fig: bypass on Gamma_0}
\end{figure}

Thus, the commutative diagram \ref{eq: comm. dia. surf. decomp and bypass, k=0} above together with the Equation \ref{eq: pi^n of xi_k} and the diagram \ref{eq: pi map} imply that it suffices to construct a tight contact structure $\xi_0'$ on $(S^3\backslash N(S),\gamma^{3})$ such that
\begin{equation} \label{eq: enough for c0=c1}
    \psi (c(\xi_0')) = c(\xi_S).
\end{equation}Because then the contact structure $\xi_0$ on $(S^3\backslash N(K),\Gamma_{0})$ that we want construct is obtained by gluing $\xi_0'$ along $S$ under map correspond to $\iota_{S,0}$, and we have $$\pi^0(c(\xi_0))=\pi^0(\iota_{S,0}(c(\xi_0')))=\pi^1(\psi^0_{-,1}(\iota_{S,0}(c(\xi_0'))))=\pi^1(\iota_{S,1}(\psi(c(\xi_0'))))=\pi^1(\iota_{S,1}(c(\xi_s)))=\pi^1(c(\xi_1))=c(\xi_{std}).$$

To find $\xi_0'$, we first view $N(S) = [-2,2]\times S$ and $\gamma^3 = \{-1,0,1\}\times \partial S$. We then take $A$ an annulus as a push off of $[-2,2]\times \partial S$ and again since $A$ is disjoint from the bypass arc $\beta$, we have a commutative diagram

\begin{equation}\label{eq: comm. dia. surf. decomp and bypass, k=0}
	\xymatrix{
SFH(-S^3\backslash N(S),-\gamma^{3})\ar[rr]^{\psi}&&SFH(-S^3\backslash N(S),-\gamma^{1})\\
SFH (-V,-\gamma^4)\otimes SFH(-S^3\backslash N(S),-\gamma^{1})\ar[rr]^{\psi_V\otimes id}\ar[u]^{\iota^3_{A}}&&SFH (-V,-\gamma^2)\otimes SFH(-S^3\backslash N(S),-\gamma^{1})\ar[u]^{\iota^1_{A}}
}
\end{equation}
Note that we have $\iota_A^3\neq 0$ (and actually it is an isomorphism) only if $K$ is non-trivial. $\gamma^2\subset \partial V$ consists of two copies of longitudes of the solid torus $V$, which means $(V,\gamma^2)$ is a product sutured manifold. Hence, by Corollary \ref{cor: unique contact structure on product sutured manifold} there is a standard tight contact structure $\xi^2_V$ on $(V,\gamma^2)$ such that $c(\xi^2_V)$ generates $SFH(-V,-\gamma^2)\cong\mathbb{F}$.

The bypass arc $\beta$ on $(V,\gamma^4)$ induces an exact triangle
\[
\xymatrix{
SFH(-V,-\gamma^2)\ar[rr]&&SFH(-V,-\gamma^2)\ar[dl]\\
&SFH(-V,-\gamma^4)\ar[ul]^{\psi_V}&
}
\]
See Figure \ref{fig: bypass on V}.
\begin{figure}[h!] \label{fig: bypass on V}
	\begin{overpic}[width = 0.8\textwidth]{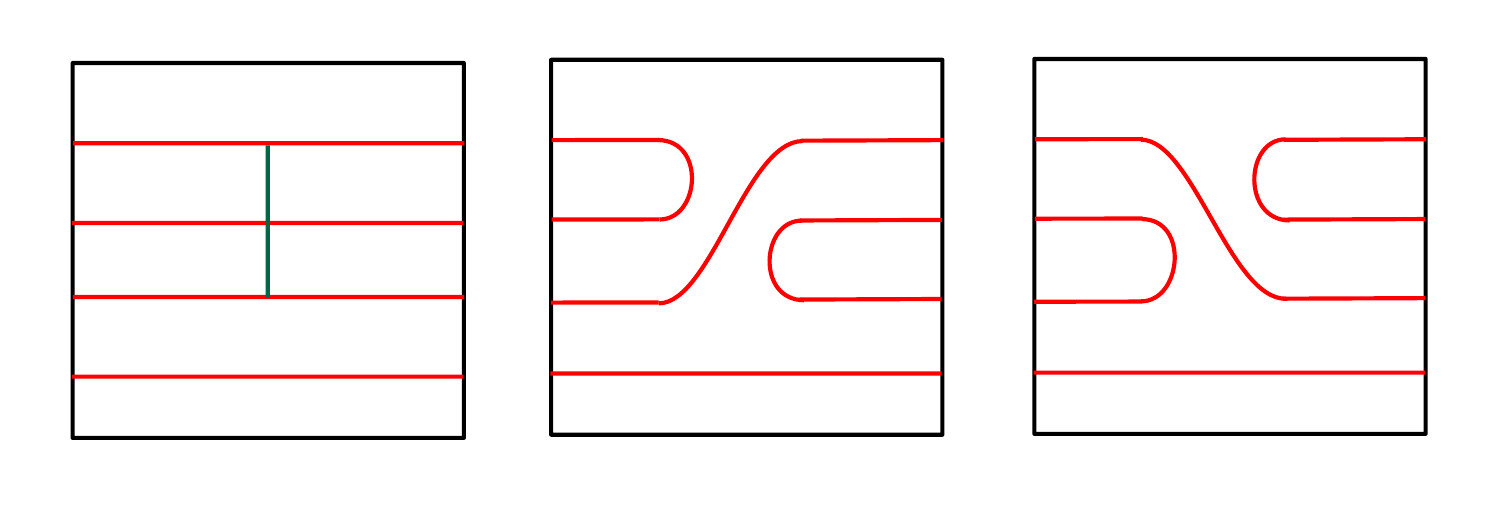}		
	\end{overpic}
	\caption{The triad associated to the bypass arc $\beta$ on $(V,\gamma^4)$. The square represents $\partial V\cong T^2$ in the standard way.}\label{fig: bypass on V}
\end{figure}

Since $SFH(-V,-\gamma^4)\cong \mathbb{F}^2$ and is generated by contact elements, and we know from the above exact triangle that $\psi_V$ is surjective. Hence, we can pick a tight contact structure $\xi_V^4$ on $(V,\gamma^4)$ such that
\[
\psi_V (c(\psi_V^4)) = c(\psi_V^2).
\]
\epf
Thus, the gluing result in Proposition \ref{prop: product annulus decomposition and HKM's gluing} gives rise to a tight contact structure $\xi_0'$ on $(S^3\backslash N(S),\gamma^{3})$ such that
\[
\psi (c(\xi_0')) = c(\xi_S).
\]

\subsection{Results in in other $3-$manifolds } \label{subsection: Fibered knots}
In this section, we prove Corollary \ref{thm: Tight contact strucutre from fibered tau} and Theorem \ref{thm: tb=0 for 2g-1 in other 3-manifold}. First, note as in Section \ref{subsec: tau in HF}, for any non-zero element $c \in \widehat{HF}(-Y)$, $\tau_{c}(K)$ can be defined as follows:
\[
\tau_c(K) = \max\{i~|~\exists x\in {HFK}^-(-Y,K,i)~s.t.~\pi_{*}(x) =c \}.
\] 

Using the generalized $\tau$ invariant, Lemma \ref{lem: pi^0 is non-trivial on top grading} can be directly generalized to $\tau_c$ in $Y$. 

\begin{lem} \label{lem: tau version pi^0 is non-trivial on top grading}

If $K\subset Y$ is non-trivial, and $\tau_c(K) = g(K)$ for some $c\in \widehat{HF}(-Y)$. Then the map
\[
\pi^0: SFH(-Y \backslash N(K),-\Gamma_0,g(K)-\frac{1}{2}) \to \widehat{HF}(-Y)
\]
is non-trivial.
\end{lem}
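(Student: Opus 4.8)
The statement to prove, Lemma \ref{lem: tau version pi^0 is non-trivial on top grading}, is the direct analogue for a general $3$-manifold $Y$ of Lemma \ref{lem: pi^0 is non-trivial on top grading}, with the $\tau$-invariant replaced by the generalized invariant $\tau_c$. The plan is to mimic the proof of Lemma \ref{lem: pi^0 is non-trivial on top grading} verbatim, checking that every ingredient used there was either already stated for an arbitrary ambient manifold $Y$ or goes through with only cosmetic changes. First I would reduce, exactly as in the $S^3$ case, to showing that the map
\[
\pi^1: SFH(-Y\backslash N(K),-\Gamma_1,g(K)) \to \widehat{HF}(-Y)
\]
is non-trivial on the top grading; this reduction is immediate from the commutative diagram (\ref{eq: pi map}) together with Lemma \ref{lem: psi^0_1 is surjective on top grading}, both of which are already stated for general $Y$ (the hypothesis that $K$ is non-trivial, i.e. does not bound a disk, is exactly what Lemma \ref{lem: psi^0_1 is surjective on top grading} requires).

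**Stabilization on the top grading.** Next I would invoke the bypass exact triangle of Lemma \ref{lem: bypass exact triangle} with $n\geq 1$ and $i = g(K) + \tfrac{n}{2}$, which is valid for any knot $K\subset Y$, to produce the triangle relating $SFH(-(Y\backslash N(K)),-\Gamma_n,g(K)+\tfrac{n-1}{2})$, $SFH(-(Y\backslash N(K)),-\Gamma_{n+1},g(K)+\tfrac{n}{2})$ and $SFH(-(Y\backslash N(K)),-\Gamma_\infty,g(K)+n)$. By the adjunction vanishing (\ref{eq: adjunction}) the last group is zero, so $\psi^n_{-,n+1}$ is an isomorphism on these top gradings for every $n\geq 1$; hence the direct system (\ref{eq: direct limit}) stabilizes on the top grading, and Theorem \ref{thm: direct limit = HFK^-} identifies this stable group with $HFK^-(-Y,K,g(K))$ compatibly with the maps $\pi^n$ and $\pi_*$. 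This gives the commutative triangle
\[
\xymatrix{
SFH(-(Y\backslash N(K)),-\Gamma_1,g(K))\ar[rr]^{\quad\quad\quad\mathcal{I}}_{\quad\quad\quad\cong}\ar[dr]^{\pi^1}&&HFK^-(-Y,K,g(K))\ar[dl]^{\pi_*}\\
&\widehat{HF}(-Y)&
}
\]
just as in (\ref{eq: pi_1 and pi_star}).

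**Conclusion from $\tau_c = g(K)$.** Finally, the hypothesis $\tau_c(K) = g(K)$ means, by the very definition of $\tau_c$ recalled just before the lemma, that there exists $x\in HFK^-(-Y,K,g(K))$ with $\pi_*(x) = c \neq 0$; in particular $\pi_*$ is non-trivial on the top Alexander grading $g(K)$. Chasing $\mathcal{I}^{-1}(x)$ around the triangle then shows $\pi^1$ is non-trivial on $SFH(-(Y\backslash N(K)),-\Gamma_1,g(K))$, which combined with the first paragraph's reduction gives that $\pi^0$ is non-trivial on $SFH(-Y\backslash N(K),-\Gamma_0,g(K)-\tfrac{1}{2})$, as desired.

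**Main obstacle.** Honestly there is no serious obstacle: every tool — the bypass triangle, the adjunction inequality, Theorem \ref{thm: direct limit = HFK^-}, the diagram (\ref{eq: pi map}), and Lemma \ref{lem: psi^0_1 is surjective on top grading} — was already formulated for an arbitrary $Y$ in the preliminary section, and the only change from the proof of Lemma \ref{lem: pi^0 is non-trivial on top grading} is replacing the statement ``$\tau(K) = g(K)$ implies $\pi_*$ non-trivial in top grading'' by ``$\tau_c(K) = g(K)$ implies $\pi_*$ non-trivial in top grading,'' which is a tautology from the definition of $\tau_c$. The one point worth a sentence of care is that the $\tau$-interpretation used here is phrased for $K\subset -Y$ (equivalently, we compute $HFK^-$ of $-Y$), so I would make sure the statement and the direct-limit identification are applied to $-Y$ consistently, exactly as was done in the $S^3$ case. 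Thus the proof is essentially a one-paragraph reference back to the proof of Lemma \ref{lem: pi^0 is non-trivial on top grading}.
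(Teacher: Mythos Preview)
Your proposal is correct and is exactly what the paper does: it states that Lemma~\ref{lem: tau version pi^0 is non-trivial on top grading} is obtained by directly generalizing Lemma~\ref{lem: pi^0 is non-trivial on top grading} to $\tau_c$ in $Y$, with no further argument. Your careful verification that each ingredient (Lemma~\ref{lem: psi^0_1 is surjective on top grading}, the bypass triangle, the adjunction inequality, Theorem~\ref{thm: direct limit = HFK^-}, and diagram~(\ref{eq: pi map})) was already stated for general $Y$ is precisely the content implicit in the paper's one-line justification.
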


Now we are going to prove Theorem \ref{thm: Tight contact strucutre from fibered tau} about Legendrian fibered knots in other contact $3-$manifolds.

\bpf[Proof of Corollary \ref{thm: Tight contact strucutre from fibered tau}]
We will first  find a contact structure on $(Y\backslash N(K),\Gamma_0)$ and show after gluing back the solid torus with slope $0$, it is also tight. 

Let $K\subset Y$ be a non-trivial knot with a non-planer Seifert surface $S$. As in Equation (\ref{eq: decomposition of Gamma_0}) we know that there is a well-groomed decomposition
\[
(Y\backslash N(K))\stackrel{S}{\leadsto} (Y\backslash N(S),\gamma^3)
\]

We now regard $N(S)$ as $[-2,2]\times \partial S$ and $\gamma^3 = \{-1,0,1\}\times \partial S$. We can now take an annulus $A_0 = [-2,2]\times \partial S$ and push it out to obtain a product annulus inside $(Y\backslash N(S),\gamma^3)$. A decomposition yields
\[
(Y\backslash N(S),\gamma^3)\stackrel{A_0}{\leadsto}(V,\gamma^4) \sqcup (Y\backslash N(S),\gamma^1 = \{0\}\times \partial S).
\]

Since $K$ is fibered, we know that $(Y\backslash N(S),\gamma^1)$ is a product sutured manifold. So the proof of Lemma \ref{lem: top grading of a nearly fibered knot is generated by contact elements} and Theorem \ref{thm: nearly fibered knots} applies here as well. In particular, we can find two tight contact structures $\xi_2$ and $\xi_2'$ on $(Y\backslash N(K),\Gamma_0)$ so that $c(\xi_2)$ and $c(\xi_2')$ generates $SFH(-(Y\backslash N(K)),-\Gamma_0,g(K)-\frac{1}{2})$. 

Now, since $c(\xi_K)\neq 0$ by the definition of contact invariant in a general contact  $3-$manifold and the proof of Theorem 5 in \cite{HeddenOSinvariantofknotsincontactmanifold} given by Hedden, we know $g(K)=\tau_{c(\xi_k)}(K)$ where $c(\xi_K) \in \widehat{HF}(-Y)$ is the contact invariant of $(Y,\xi)$. Then Lemma \ref{lem: tau version pi^0 is non-trivial on top grading} applies to our current situation, and we conclude that the map
\[
\pi^0: SFH(-(Y\backslash N(K)),-\Gamma_0,g(K)-\frac{1}{2})\to \widehat{HF}(-Y)
\]
is non-trivial, we may assume, without loss of generality, that $\pi^0(\xi_2)\neq 0$. Note that $\pi^0$ is induced by a contact $2$-handle attachment along the meridian of the knot, thus we obtain a contact structure $\xi'$ on $Y$ with the property that
\[
c(\xi') = \pi^0(c(\xi_2)) \neq 0.
\]
As in the proof of Theorem \ref{thm: nearly fibered knots}, we conclude that the knot $L_K$ as the core of the neighborhood is Legendrian with respect to $\xi'$ on $Y$, and since the slope of dividing set is $0$, Lemma \ref{lem: tb and suture} implies that $\tb(L_K)=0$. 

Lastly, we want to show $\xi'=\xi_K$ up to adding Giroux torsion in the complement of $L_K$. To obtain this result, we use the Theorem 1.1 in \cite{EtnyreVanHMFiberedBennequin} by showing that the transverse push-off of the Legendrian representative $L_K$ realizes the Bennequin bound in $(Y,\xi')$. 

Again, by the definition of the $Eh$ invariant of  Legendrian knot in $(Y,\xi')$, we know $Eh(L_K)=c(Y\backslash L_K,\xi')=c(\xi_2) \in SFH(-(Y\backslash N(K)),-\Gamma_0,g(K)-\frac{1}{2})$, which is non-zero. Moreover, since the limit map \ref{eq: direct limit} stabilizes after the first map, the first map $\psi^0_{-,1}$ sends the $Eh(L_K)$ to the LOSS invariant $\mathcal{L}(L_K) \in HFK^-(-Y,K,g(k))$ \cite{etnyre2017sutured}. Thus, we know the Alexander grading of $\mathcal{L}(L_K)$ equals to $g(K)$. \cite[Corollary 1.7]{OzsvathStipsiczContactsurgeryandtransverseinvariant} implies the self linking number of the transverse push-off of $L_K$ is $2g-1$, which realizes the Bennequin bound in $(Y,\xi')$. Hence, Theorem 1.1 in \cite{EtnyreVanHMFiberedBennequin} implies $\xi'=\xi_K$ up to adding Giroux torsion. 
Moreover, by the Theorem 1.5 in \cite{EtnyreVanHMFiberedBennequin}, the Giroux torsion is adding in the complement of $L_K$, so we still have a $\tb=0$ representative in $\xi_K$, which finishes the proof. \epf

To prove Theorem \ref{thm: tb=0 for 2g-1 in other 3-manifold}, which is the general version of Theorem \ref{thm: tb=0 for knot realizeing T-B bound}, we need a general version of Lemma \ref{lem: contact element on top grading}. Note that even though we call it is the general version, the two proofs are identically the same.

\blem\label{lem: general contact element on top grading}
Suppose a null-homologous knot $K$ has a Legendrian representative $L$ realizing the three-dimensional Thurston-Bennequin bound in $(Y,\xi)$ and the contact invariant $c(\xi)\neq 0$. Assume without loss of generality that $L$ has $tb = -n$ for a large positive integer $n$. Let $N(L)$ be a standard contact neighborhood of $L$ and $\xi_L$ be the restriction of $\xi$ on $Y\backslash N(L)$. Then
\[
c(\xi_L) \in SFH(-Y\backslash N(K),-\Gamma_n,g(K)+\frac{n-1}{2})
\]
Furthermore, we have
\[
\pi^n(c(\xi_L)) = c(\xi) \in SFH (-Y(1)).
\]
\elem

\begin{proof} [Proof of Theorem \ref{thm: tb=0 for 2g-1 in other 3-manifold}]

The proof is the same as the proof of Theorem \ref{thm: restated 1.1}. The only thing we need to note is that in $(S^3,\xi_{std})$, it is enough to construct $\xi_0$ such $\pi^0(\xi_0)=\pi^n(c(\xi_L)) = c(\xi_{std})\neq 0$, because there is a unique tight contact structure in $S^3$ with non-vanishing invariant. There is no ambiguity of the contact structure we obtained from $\xi_0$ after we gluing back the contact handles. This argument stays the same when we assume $(Y,\xi)$ is uniquely represented.

If we again have a contact structure $\xi_0$ on $(Y\backslash N(K),\Gamma_0)$ satisfying $\pi^0(\xi_0)=\pi^n(c(\xi_L)) = c(\xi)$, then since $(Y,\xi)$ is unique represented, we know the contact structure obtained from $\xi_0$ is tight and equal to $\xi$. The rest proof is identical.
\end{proof}




\bibliographystyle{alpha}
\bibliography{ref.bib}

\end{document}